\documentclass[11pt,reqno]{amsart}
\usepackage[T1]{fontenc}
\usepackage{lmodern}
\usepackage[margin=1in]{geometry}
\usepackage{amsmath,amssymb,mathtools,microtype}
\usepackage[hidelinks]{hyperref}
\hypersetup{pdftitle={A logarithmic error term for subset sums of zero-sum-free sets},pdfauthor={Chao Liu}}
\usepackage{enumitem}
\usepackage{needspace}
\setlist[enumerate]{label=\textup{(\roman*)},leftmargin=2em}
\numberwithin{equation}{section}
\newtheorem{theorem}{Theorem}[section]
\newtheorem{lemma}[theorem]{Lemma}
\newtheorem{proposition}[theorem]{Proposition}
\newtheorem{corollary}[theorem]{Corollary}
\newtheorem{conjecture}[theorem]{Conjecture}
\theoremstyle{remark}

\newcommand{\Sig}{\Sigma_0}
\newcommand{\N}{\mathsf N}
\newcommand{\gen}[1]{\langle #1\rangle}
\newcommand{\stab}{\operatorname{Stab}}

\newcommand{\tri}[1]{\tau_{#1}}
\newcommand{\calH}{\mathcal H}
\DeclareMathOperator{\Ol}{Ol}
\allowdisplaybreaks[2]
\title[Subset sums with a logarithmic error term]{A logarithmic error term for subset sums\protect\\ of zero-sum-free sets}
\author{Chao Liu}
\address{Department of Mathematics, University of Mississippi,
University, MS 38677, USA}
\email{chaoliuac@gmail.com}
\date{Revised draft, October 28, 2026}
\subjclass[2020]{11B75, 20K01}
\keywords{Subset sums, zero-sum-free sets, finite abelian groups, Olson constant}

\begin{document}
\begin{abstract}
For a positive integer $k$, let $f(k)$ be the minimum number of distinct
nonempty subset sums of a zero-sum-free $k$-element set, where the minimum
is taken over all finite abelian groups. We prove
\[
 f(k)\ge \frac{k^2}{2}-13k\log(k+1)-116k.
\]
Together with the classical upper bound, this gives
$f(k)=k^2/2+O(k\log k)$, with the optimal leading coefficient.
As a consequence, the Olson constant of every finite abelian group $G$
satisfies $\Ol(G)\le\sqrt{2|G|}+O(\log|G|)$.
The argument combines a triangular lower bound for repeated supports,
a count using blocks of two and three copies, and a greedy expansion
argument. No restriction is imposed on the order of the group or on
the stabilizer of the set of subset sums.
\end{abstract}
\maketitle

\section{Introduction}

Let $G$ be a finite abelian group, written additively. For a finite sequence
$T$ over $G$, let $\Sig(T)$ be the set of its subsequence sums, including
the empty sum. When $T$ is a set, these are its subset sums. We call $T$
\emph{zero-sum-free} if none of its nonempty subsequences has sum zero.
For such a sequence, put
\[
 \N(T)=|\Sig(T)|-1.
\]
Thus $\N(T)$ counts the distinct nonempty subsequence sums. We study
\[
 f(k)=\min\{\N(S): S\subseteq G,\ |S|=k,\ S\text{ zero-sum-free},\
                  G\text{ a finite abelian group}\}.
\]
The distinction between sets and sequences is essential in this definition:
the elements of $S$ must be distinct.

Eggleton and Erd\H{o}s introduced this invariant and proved, in particular,
that
\begin{equation}\label{eq:classical-upper}
 f(k)\le \left\lfloor\frac{k^2}{2}\right\rfloor+1
 \qquad(k\ge4);
\end{equation}
see \cite{EE}. Olson obtained a quadratic lower bound with coefficient
$1/9$ \cite{Olson}. Gao, Huang, Hui, Li, Liu, and Peng subsequently proved
the uniform estimate $f(k)\ge k^2/6$ \cite[Theorem 1.1]{GHHLLP}.
Our main result approaches the coefficient in
\eqref{eq:classical-upper} with a logarithmic loss per element.

\begin{theorem}\label{thm:main}
Let $S$ be a zero-sum-free set of $k\ge1$ elements in a finite abelian
group. Then
\begin{equation}\label{eq:main}
 \N(S)\ge \frac{k^2}{2}-13k\log(k+1)-116k,
\end{equation}
where $\log$ denotes the natural logarithm. Consequently,
\[
 \frac{k^2}{2}-O(k\log k)\le f(k)
 \le\left\lfloor\frac{k^2}{2}\right\rfloor+1.
\]
\end{theorem}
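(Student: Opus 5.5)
We will prove \eqref{eq:main} by building $S$ up one element at a time and tracking how $\Sig$ grows. We order $S=\{a_1,\dots,a_k\}$ greedily and set $S_j=\{a_1,\dots,a_j\}$. At step $j$ we choose $a_{j+1}\in S\setminus S_j$ so as to maximize the increment $|\Sig(S_{j+1})|-|\Sig(S_j)|=|(\Sig(S_j)+a_{j+1})\setminus\Sig(S_j)|$. The target is to show that, apart from $O(\log k)$ exceptional losses per step, each increment is at least about $k-j$ in an averaged sense, or at least about $j$. Summing such increments gives $\sum_j \min(j,\cdot)\approx k^2/2$. Zero-sum-freeness enters through the fact that $0\notin\Sig(S_j)+x$ for $x\in S\setminus S_j$, and more generally that $\Sig(S_j)$ has no nontrivial period: no nonzero $g$ satisfies $\Sig(S_j)+g=\Sig(S_j)$. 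If such a $g$ existed, adding elements of the stabilizer would eventually produce a zero sum. This is what lets us avoid any hypothesis on $\stab$.

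\textbf{Step 1 (Kneser-type increment).} For a set $A=\Sig(S_j)$ and $x\notin S_j$, the sumset $A\cup(A+x)$ exceeds $A$ by at least the number of ``arithmetic progression breaks'' of $A$ in direction $x$. If the increment for every remaining $x$ is smaller than some threshold $m$, then $A$ is close to a union of at most $m$ long progressions of difference $x$, simultaneously for all remaining $x$. We would encode this as a \emph{repeated support} structure: many remaining elements $x$ for which $A$ is nearly $x$-periodic. The triangular lower bound mentioned in the abstract should say that if $r$ elements each give increment $<m$, then the multiset structure forces $|A|\ge \tri{r}$-type growth, of order $r^2/2$. This lets us charge small increments against an already large $|\Sig|$.

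\textbf{Step 2 (blocks of two and three copies).} To beat the $k^2/6$ barrier of \cite{GHHLLP}, we would group the remaining elements into pairs and triples $\{x,y\}$, $\{x,y,z\}$. We then count the new sums $A+x$, $A+y$, $A+x+y$, and so on jointly. The point is that although a single $x$ may add few sums, a block adds roughly as many as its size times the current ``width''. Concretely, we aim for a lemma of the form: if $|A|\ge \binom{j+1}{2}-Cj\log j$, then some block $B$ of size $t\in\{2,3\}$ satisfies $|\Sig(S_j\cup B)|\ge |A|+t j-O(\log j)$. The $\log$ loss arises from a dyadic pigeonhole over the possible increment sizes.

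\textbf{Step 3 (summation).} We run the greedy process, alternating single steps and block steps, and sum the increments. We get
\[
 \N(S)\ge\sum_{j\le k}\bigl(j-c\log(j+1)-c'\bigr)\ge \frac{k^2}{2}-13k\log(k+1)-116k.
\]
The constants are chosen generously so that the bound also holds trivially for small $k$, using $\N(S)\ge k$ or the estimate $f(k)\ge k^2/6$ of \cite{GHHLLP}. The upper bound in the theorem is just \eqref{eq:classical-upper}.

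The main obstacle is Step 2: we must show that small increments for single elements force a large increment for some short block, uniformly over all groups. We would first try to handle the two-copy case via the identity $|A\cup(A+x)\cup(A+y)\cup(A+x+y)|$ together with the non-periodicity of $A$. Only if pairs fail would we fall back to triples.
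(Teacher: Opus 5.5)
Your proposal has a genuine gap: two load-bearing claims are false, and the key step is missing.

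First, $\Sig(S_j)$ can be periodic. For the zero-sum-free set $\{(1,0),(2,0)\}$ in $\mathbb Z/4\mathbb Z\times\mathbb Z/m\mathbb Z$, the set of subset sums is the subgroup $\mathbb Z/4\mathbb Z\times\{0\}$. Your justification fails because a period need not be an element, or a sum of elements, of $S$, so there is nothing to ``add.'' This phenomenon is real, and the paper handles it rather than excluding it. The greedy process stops when the tail generates a subgroup $H_i$ that is too small, as in \eqref{eq:capacity-stop}. A tail lying in a proper subgroup is then treated by the coset splitting $\N(a_vT')=2\N(T')+1$ together with the quarter bound of Lemma~\ref{lem:quarter}. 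You also never deal with elements of order two. Olson's estimate \eqref{eq:olson-boundary} and the expansion alternative both require antisymmetry. The paper removes the involutions first: either $\N(S)\ge k^2/2$ already, or there are fewer than $3\log(k+1)$ of them. This costs $kq$, which is where $13=10+3$ comes from.

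Second, your Step~2 lemma is false as stated, and the per-step target ``increment $\approx j$'' is the wrong normalization. In $\mathbb Z/n\mathbb Z$ with $n=\tri k+1$, let $S=\{1,\dots,k\}$ and $S_j=\{3,\dots,k\}$, so $j=k-2$. For $k\ge6$, $\Sig(S_j)$ misses exactly seven residues: $1,2,\tri k-5,\tri k-4,\tri k-2,\tri k-1,\tri k$. The hypothesis $|A|\ge\binom{j+1}{2}-Cj\log j$ therefore holds with room to spare. Yet the only available block $\{1,2\}$ adds exactly $7$ sums, not $2j-O(\log j)$. Once the prefix sumset is large, there is simply no room for increments of size $j$. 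Meanwhile, a bound of the form $\min\{j,k-j\}$, which is what your ``or'' delivers, sums only to about $k^2/4$, the level of Lemma~\ref{lem:quarter}.

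The paper instead measures the increment at step $i$ against the number $c_i$ of \emph{remaining} elements. Room is guaranteed because prefix sums avoid the negatives of tail sums, so $|G\setminus B_{i-1}|\ge\N(C_i)$. Growth comes from Proposition~\ref{prop:alternative}. Either the tail already has $\N(C_i)\ge c_i^2/2$, and one stops and applies \eqref{eq:superadditive}, or $|rC_i^*|\ge2r(c_i-100)$ whenever $rC_i^*\ne H_i$. Lemma~\ref{lem:exp-boundary} turns the latter into an increment of at least $c_i-106-c_i^2/\sigma_{i-1}$.

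The logarithmic loss comes from three sources, not from a dyadic pigeonhole: the $O(\log k)$ initial multiplicative steps, the harmonic sum of the $c_i^2/\sigma_{i-1}$ errors, and the involutions.

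Finally, the ``triangular bound'' and the ``blocks of two and three copies'' are not about appending pairs or triples of elements to the prefix. They are Lemma~\ref{lem:repetition} ($\N(U^r)\ge r\tri m$ when $U^{2r}$ is zero-sum-free) and Lemma~\ref{lem:capacity}, which count multiplicity vectors over the occupied cosets of the Kneser stabilizer $\stab_H(rC^*)$. They are used to show that any failure of expansion forces $\N(C)\ge c^2/2$. Your Steps~1--2 contain no substitute for this dichotomy, so the step you flag as the main obstacle is exactly the missing content of the proof.
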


The associated zero-sum threshold is the \emph{Olson constant}
$\Ol(G)$: the smallest positive integer $t$ such that every $t$-element
subset of $G$ has a nonempty zero-sum subset.

\begin{corollary}\label{cor:olson}
For every finite abelian group of order $n$,
\[
 \Ol(G)\le\sqrt{2n}+13\log(n+1)+266.
\]
In particular, $\Ol(G)\le\sqrt{2n}+O(\log n)$, uniformly in $G$.
\end{corollary}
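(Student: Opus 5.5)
The corollary follows from Theorem~\ref{thm:main} by a counting argument. Suppose $S\subseteq G$ is a zero-sum-free set with $|S|=k$. Then $0\notin\Sig(S)\setminus\{0\}$: a nonempty subset summing to $0$ would be a zero-sum subset. So the nonempty subset sums lie in $G\setminus\{0\}$, which gives
\[
 \N(S)\le n-1.
\]
Combining this with \eqref{eq:main} yields
\[
 \frac{k^2}{2}-13k\log(k+1)-116k\le n-1 .
\]
Since $\Ol(G)-1$ is the maximal size of a zero-sum-free subset, it suffices to show that any $k$ obeying this inequality satisfies $k<\sqrt{2n}+13\log(n+1)+265$. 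We may then take $k=\Ol(G)-1$ and conclude.

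The plan for the inequality is to compare $k$ with $K=\sqrt{2n}+13\log(n+1)+265$. Two preliminary facts come first.
\begin{itemize}
 \item[(a)] Distinct subsets of a zero-sum-free set have distinct sums? This is false in general. What is true is that the elements of $S$ are nonzero and distinct, so $k\le n-1$. Hence $\log(k+1)\le\log n\le\log(n+1)$.
 \item[(b)] Suppose $k\ge K$. With $L=\log(n+1)$, bound (a) gives $\frac{k^2}{2}-13kL-116k\le n-1$. The left side is increasing in $k$ once $k\ge 13L+116$, which holds here. So it suffices to check the inequality fails at $k=K$.
\end{itemize}

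Write $K=a+b$ with $a=\sqrt{2n}$ and $b=13L+265$. Then
\[
 \frac{K^2}{2}-(13L+116)K=\frac{a^2}{2}+ab+\frac{b^2}{2}-(b-149)(a+b).
\]
This equals $n+149a+149b-\frac{b^2}{2}$. The remaining task is to compare $149a+149b$ with $\frac{b^2}{2}$, and this is the one step that needs care. The term $b^2/2$ is of order $L^2$, while $149a$ is of order $\sqrt n$. So for large $n$ we have $149(a+b)-b^2/2>-1$, and the inequality fails as required. For small $n$ one still has to verify $149(a+b)+1>b^2/2$, that is, $b^2-298b<298a+2$. This holds whenever $b<298$, i.e.\ $13L<33$, so $n\le 12$.

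The main obstacle is therefore purely arithmetic. The constant $266$ must absorb the $L^2$ term for moderate $n$, and it seems it does not do so directly. I would instead rework the computation by using the cruder bound $\log(k+1)\le\log(K+1)$ inside the linear term, treating $13k\log(k+1)+116k$ as roughly $(13L+116)k$ only when $k\approx\sqrt{2n}$. Solving the resulting quadratic gives
\[
 k\le c+\sqrt{c^2+2n-2}\le \sqrt{2n}+2c,
\]
where $c=13\log(k+1)+116$. This yields $k\le\sqrt{2n}+26\log(k+1)+232$, which is too weak by a factor of $2$.

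The sharper route is to use $\sqrt{c^2+2n}\le\sqrt{2n}+c^2/(2\sqrt{2n})$, splitting into two cases:
\begin{itemize}
 \item[(1)] $c^2\le 2\sqrt{2n}\cdot 150$, which gives $k\le\sqrt{2n}+c+150$;
 \item[(2)] small $n$, where the trivial bound $k\le n-1$ suffices.
\end{itemize}
Finally, replace $\log(k+1)$ by $\log(n+1)$ to obtain $c+150\le 13\log(n+1)+266$. Carrying out this case split cleanly is the main obstacle.
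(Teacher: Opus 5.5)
Your proposal has a genuine gap, and it cannot be fixed by better bookkeeping. You use only two constraints on $k$: the inequality from Theorem~\ref{thm:main} and the trivial bound $k\le n-1$. These two constraints together do not imply the corollary. Take $n=500$ and $k=389$. Then $k\le n-1$, and $k^2/2-13k\log(k+1)-116k\approx 366\le n-1$. Yet $\sqrt{2n}+13\log(n+1)+265\approx 377.4<389$. So no case split built on these inputs can close the argument.

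Your case (2) fails concretely. Since $c\ge116$, the condition $c^2>300\sqrt{2n}$ holds for every $n\le1005$. But $n-1\le\sqrt{2n}+13\log(n+1)+265$ holds only for $n\le370$. Case (1) is also off by one: $k\le\sqrt{2n}+c+150$ only gives $\Ol(G)\le\sqrt{2n}+13\log(n+1)+267$.

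The missing input is the quadratic half of \eqref{eq:basic}. Since $k^2/6\le\N(S)\le n-1$, you get $k\le\sqrt{6n}$. Hence $k+1\le(\sqrt6+1)\sqrt{n+1}$ and $\log(k+1)\le\frac12\log(n+1)+\log(\sqrt6+1)$. This recovers exactly the factor $2$ you lost by using $\log(k+1)\le\log(n+1)$, which came from the far weaker $k\le n-1$.

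Your own bound $k\le\sqrt{2n}+26\log(k+1)+232$, which you discarded as too weak, then becomes $k\le\sqrt{2n}+13\log(n+1)+232+26\log(\sqrt6+1)$. Since $26\log(\sqrt6+1)<33$, this gives $\Ol(G)=k+1<\sqrt{2n}+13\log(n+1)+266$. That is precisely the paper's proof.
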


Hamidoune and Z\'emor proved
$\Ol(G)\le\sqrt{2|G|}+O(|G|^{1/3}\log|G|)$
\cite[Theorem 4.5]{HZ}. Thus the leading constant $\sqrt2$ in the Olson
problem is classical. Corollary~\ref{cor:olson} reduces the error in that
estimate to logarithmic order. Theorem~\ref{thm:main} controls the actual
number of subset sums; a bound on the ambient group order alone does not
give this conclusion.

The proof has three parts. First, for every integer $r\ge2$, if $U$ is
a set and the sequence containing $2r$ copies of every element of $U$
is zero-sum-free, then $r$ copies already have at least
$r|U|(|U|+1)/2$ nonempty sums. Second,
we combine blocks of two and three copies to count sums in a quotient
whose multiplicities are almost equal. This yields an alternative for
antisymmetric zero-sum-free sets: either the set already has at least
half the square of its size in nonempty subset sums, or every proper
iterated sum of its symmetrization satisfies the expansion bound in
Proposition~\ref{prop:alternative}. Finally, a greedy argument converts
this expansion into \eqref{eq:main}. Its stopping rule handles tails
that generate a proper subgroup.

AI tools developed by OpenAI were used in this work.

\section{Notation and additive tools}

For a sequence $T=t_1\cdots t_\ell$, write
$\sigma(T)=t_1+\cdots+t_\ell$, with $\sigma(\varnothing)=0$.
For a subgroup $H\le G$, let $\pi_H:G\to G/H$ be the natural quotient map.
For an integer $r\ge1$ and a set $U$, the notation $U^r$ denotes the
sequence in which each element of $U$ occurs $r$ times. The sumset $rX$,
by contrast, denotes $X+\cdots+X$, with repetitions allowed and exactly
$r$ summands. Set
\[
 \tri m=\frac{m(m+1)}2,\qquad
 C^*=C\cup(-C)\cup\{0\}.
\]
A set $C$ is \emph{antisymmetric} if $C\cap(-C)=\varnothing$.
A zero-sum-free set becomes antisymmetric after all its elements of
order two have been removed.

For $B\subseteq G$, define its translation boundary by
\[
 \lambda_B(x)=|(B+x)\setminus B|.
\]
We use
\begin{equation}\label{eq:boundary-properties}
 \lambda_B(x)=\lambda_B(-x)=\lambda_{G\setminus B}(x),\qquad
 \lambda_B(x+y)\le\lambda_B(x)+\lambda_B(y).
\end{equation}
For $1\le j<|X|$, write $j\wedge X$ for the sums of $j$ distinct
elements of $X$.

The following standard estimates will be used throughout:
\begin{equation}\label{eq:basic}
 \N(A)\ge\max\{|A|^2/6,\,2|A|-1\}
 \quad\text{for every nonempty zero-sum-free set }A,
\end{equation}
and
\begin{equation}\label{eq:superadditive}
 \N(PQ)\ge\N(P)+\N(Q)
 \quad\text{if the concatenation }PQ\text{ is zero-sum-free}.
\end{equation}
See \cite[Theorem 1.1 and Lemmas 2.1--2.2]{GHHLLP}.
We set $\N(\varnothing)=0$.

We also use Kneser's theorem in its multiple-summand form: if $J$ is
the stabilizer of $A_1+\cdots+A_t$, then
\begin{equation}\label{eq:kneser}
 |A_1+\cdots+A_t|\ge\sum_{i=1}^t|A_i+J|-(t-1)|J|.
\end{equation}
By \cite[Theorem~3.2]{BGGH}, including its elementary two-group exception,
\begin{equation}\label{eq:restricted}
 |j\wedge X|\ge |X|-1\qquad(1\le j<|X|).
\end{equation}

We recall the structural input needed only for the repeated-support
lemma below. A set $X\subseteq Q$ is a \emph{Vosper set} if
$|X+Y|\ge\min\{|Q|-1,|X|+|Y|\}$ whenever $|Y|\ge2$.
An AP-representation or a Vosper-representation of $A\subseteq G$
means a subgroup $J$ for which
\[
 |A+J|<\min\{|G|,|A|+|J|\},
\]
and the image of $A$ in $G/J$ is, respectively, an arithmetic progression
or a Vosper set. If $0\in A$, $\gen A=G$, and $|A|\le|G|/2$, such a
representation exists \cite[Theorem 3.1]{BGGH}.
Call $U$ \emph{valid} if $|\Sig(V)|\le|\gen V|/2$ for every nonempty
$V\subseteq U$. The following consequence of
\cite[Proposition 4.1]{BGGH} handles the AP case: if $U$ is valid,
antisymmetric, generates $G$, $|U|=m\ge4$, and $U^*$ has an
AP-representation, then $|\Sig(U)|\ge\tri m+1$.
Here $\Sig$ includes the empty sum, as does the subset-sum notation
in \cite{BGGH}. The growth and boundary estimates for the Vosper case
are proved below under the four-copy zero-sum-free hypothesis used
in this paper.

Finally, Olson's boundary estimate, in the form
\cite[Lemma 2.7]{GHHLLP}, says that if $C$ is antisymmetric,
$c=|C|$, $H=\gen C$, and
$|H|\ge2b$ for $b=\min\{|B|,|G\setminus B|\}\ge1$, then
\begin{equation}\label{eq:olson-boundary}
 \max_{x\in C}\lambda_B(x)\ge\frac12\min\{b+1,c+1\}.
\end{equation}
In this assertion $H$ may be a proper subgroup of $G$.

\section{Repeated supports}

\begin{lemma}\label{lem:four-copy-growth}
Let $U$ be an $m$-element set generating a finite abelian group $G$,
where $m\ge4$. Suppose $U^4$ is zero-sum-free and $U^*$ has a
Vosper-representation. Then, for every integer $j\ge1$,
\begin{equation}\label{eq:four-copy-growth}
 |jU^*|\ge\min\{|G|,2mj+1\}.
\end{equation}
\end{lemma}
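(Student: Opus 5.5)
We argue by induction on $j$ and use the Vosper representation to obtain growth of $2m$ at each step. For $j=1$ the statement reads $|U^*|\ge\min\{|G|,2m+1\}$. Since $U^4$ (hence $U^2$) is zero-sum-free, no $u\in U$ has $2u=0$, and $u=-u'$ with $u\ne u'$ would give the zero sum $u+u'$. So $U$ is antisymmetric and $|U^*|=2m+1$. In general, write $J$ for the subgroup of the Vosper representation, $\pi=\pi_J$, and $\bar A=\pi(U^*)\subseteq G/J$, a Vosper set with $|U^*+J|<\min\{|G|,|U^*|+|J|\}$.

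The first step is to pin down the shape of $U^*$ relative to $J$. The inequality $|U^*+J|<|U^*|+|J|$ says that $U^*$ fills its $J$-cosets almost completely, missing fewer than $|J|$ points in total. I want to show $J$ is small, so that $\bar A$ has about $(2m+1)/|J|$ elements, and in fact I expect $|J|\le 2$ or so. The key tool is zero-sum-freeness of $U^4$. If $U^*\cap J$ were large, then $U\cap J$ would be a large antisymmetric set whose four-fold repetition is zero-sum-free in $J$. By \eqref{eq:basic} its sums number roughly $|U\cap J|^2$ inside $J$, which forces $|J|$ to be much larger than $|U^*\cap J|$. That contradicts the near-fullness of the coset $J$. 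The same argument applies to every nonzero coset after translating by an element of that coset.

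The second step is growth in the quotient. Vosper's property gives $|\bar X+\bar A|\ge\min\{|G/J|-1,|\bar X|+|\bar A|\}$ whenever $|\bar A|\ge2$. Applying this iteratively to $\pi(jU^*)=j\bar A$ yields $|j\bar A|\ge\min\{|G/J|-1,\,j|\bar A|\}$, up to the $-1$ slack. Lifting back, each coset in $jU^*+J$ is full for $j\ge2$, since $U^*+U^*$ already fills any coset that $U^*$ nearly fills. Hence $|jU^*|\ge |J|\cdot|j\bar A|$, which gives roughly $j|U^*+J|\ge j(2m+1)$ until we saturate $G$ apart from one coset.

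The final step, which I expect to be the main obstacle, is the endgame in which $|j\bar A|=|G/J|-1$. Here we must rule out the possibility that $jU^*$ misses a whole coset while $|G|\le 2mj+1$ still fails. I would handle this with the symmetry $jU^*=-jU^*$ together with zero-sum-freeness. The missing coset $c+J$ satisfies $c+J=-(c+J)$. I would then show that $U^4$ being zero-sum-free forces $\Sig(U^2)$ to avoid a region around $0$ of size about $|\Sig(U^2)|$, and compare $jU^*$ with $\Sig(U^{j})$ to derive a contradiction. If this comparison does not work directly, I would fall back on the weaker count $|jU^*|\ge|G|-|J|$ and check that $|G|-|J|\ge 2mj+1$ in the remaining range using the bound on $|J|$ from the first step.
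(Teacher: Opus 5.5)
Your outline has the same architecture as the paper's proof: Vosper growth of $\bar A=\pi_J(U^*)$ in $G/J$, $J$-periodicity of $jU^*$ for $j\ge2$, and a separate last layer. However, the last layer, which you correctly flag as the crux, is not proved, and the claim your fallback relies on is false. A representation bounds only the \emph{total} deficiency $\delta=|U^*+J|-|U^*|<|J|$. Nothing forces the zero coset to be nearly full, and nonzero fibers can be entire cosets. So your Step~1 argument neither transfers to nonzero cosets nor bounds $|J|$.

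For instance, let $h\ge2$, let $p>16h$ be prime, and take $G=\mathbb{Z}_p\times\mathbb{Z}_h$, $J=\{0\}\times\mathbb{Z}_h$ and $U=((1,0)+J)\cup((3,0)+J)$. Then $U$ generates $G$ and $m=2h\ge4$. A nonempty subsequence of $U^4$ has first coordinate $k_1+3k_3$ with $1\le k_1+3k_3\le16h<p$, so $U^4$ is zero-sum-free. The image $\pi_J(U^*)=\{0,\pm1,\pm3\}$ is not a progression in $\mathbb{Z}_p$, hence is a Vosper set by Vosper's theorem. Finally $|U^*+J|=5h<|U^*|+|J|$. Thus $|J|$ is unbounded, and checking $|G|-|J|\ge2mj+1$ ``using the bound on $|J|$'' cannot work. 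Your other strategy (comparing $jU^*$ with $\Sig(U^j)$ to reach a contradiction) is too vague to assess.

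The missing ingredient is the sharp form of the observation you make in passing, that $A=U\cap J$ is small inside $J$. Put $h=|J|$ and $a=|A|$.
\begin{enumerate}
\item The zero fiber of $U^*$ has $2a+1$ points, so $\delta\ge h-2a-1$.
\item $2a\le|\Sig(A)|\le h/2$, with strict inequality for even $h$. This gives $\delta\ge(h-1)/2$.
\item The upper bound is validity: $|\Sig(V)|>|\gen V|/2$ forces $\Sig(V)+\Sig(V)=\gen V$, so $-v\in\Sig(V^2)$, and a third copy of $v$ gives a zero sum.
\item For even $h$, equality is excluded. The four-copy hypothesis gives $\Sig(A^2)\cap(-\Sig(A^2))=\{0\}$, so $|\Sig(A^2)|\le h/2$. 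Equality would then give $\Sig(A)=\Sig(A^2)=\Sig(A)+\Sig(A)$, a subgroup, contradicting validity.
\end{enumerate}
Now let $u=|\bar A|$, $v=|G/J|$, and let $t\ge2$ be the last index with $t\bar A\ne G/J$. Vosper gives $|t\bar A|\ge\min\{v-1,tu\}$. In the bad case $tu>v-1$, the condition $(t-1)\bar A+\bar A\ne G/J$ forces $tu=v$. Periodicity then gives $|tU^*|\ge h(tu-1)=t|U^*|+t\delta-h\ge t|U^*|-1\ge2mt+1$.

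The constant matters: for $t=2$ you need exactly $\delta\ge(h-1)/2$. Your quadratic estimate for $|\Sig(A)|$ only gives $a=O(\sqrt h)$, which is too weak for small $h$. Even the two-copy bound $|\Sig(A)|\le(h+1)/2$ loses too much.

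Your reason for periodicity (``$U^*+U^*$ fills any coset that $U^*$ nearly fills'') needs the same correction, since the zero fiber is at most about half full. What works instead:
\begin{enumerate}
\item any two distinct fibers have total size exceeding $h$;
\item a fiber over $R\ne-R$ has size exceeding $h/2$ by symmetry;
\item the zero coset must be checked separately when every nonzero direction is self-inverse.
\end{enumerate}
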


\begin{proof}
The four-copy assumption makes $U$ antisymmetric and valid. To see
validity, suppose $Z=\Sig(V)$ has size greater than $|\gen V|/2$
for some nonempty $V\subseteq U$. Then $Z+Z=\gen V$, and a
representation of $-v$ by $V^2$, followed by a third copy of any
$v\in V$, gives a zero-sum subsequence of $U^4$.

Write $X=U^*$, and let $H$ be the subgroup in a
Vosper-representation of $X$. Put
\[
 h=|H|,\quad Q=\pi_H(X),\quad u=|Q|,\quad v=|G/H|,
 \quad \delta=hu-|X|.
\]
The representation is strict, so $0\le\delta<h$ and $Q$ is a proper
generating subset of $G/H$. It is symmetric and contains zero.
Consequently $u\ge3$: a symmetric generating set of size two containing
zero would be the entire quotient of order two.

We first prove the kernel estimate
\begin{equation}\label{eq:four-copy-kernel}
 \delta\ge\frac{h-1}{2}.
\end{equation}
Let $A=U\cap H$, $a=|A|$. Since $|X\cap H|=2a+1$,
$\delta\ge h-2a-1$. If $a=0$, this proves
\eqref{eq:four-copy-kernel}. If $a>0$, validity and
\eqref{eq:basic} give
\[
 2a\le|\Sig(A)|\le |\gen A|/2\le h/2.
\]
For odd $h$ this yields $2a\le(h-1)/2$, as required.
For even $h$, equality $|\Sig(A)|=h/2$ is impossible. Indeed, set
$Z=\Sig(A)$ and $D=\Sig(A^2)=Z+Z$. The fact that $A^4$ is
zero-sum-free gives $D\cap(-D)=\{0\}$, hence $|D|\le h/2$.
Since $Z\subseteq D$ and $|Z|=h/2$, we obtain $Z+Z=Z$.
A finite set containing zero and closed under addition is a subgroup.
Thus $Z=\gen A$, contradicting validity. Therefore
$2a\le h/2-1$ when $h$ is even, which again proves
\eqref{eq:four-copy-kernel}.

Next we show that $2X$ is $H$-periodic. For $R\in Q$, let
$X_R=X\cap\pi_H^{-1}(R)$. The total number of missing points
in these fibers is $\delta<h$. If $R\ne S$, then
$|X_R|+|X_S|>h$, so $X_R+X_S$ fills the whole coset indexed by
$R+S$. If $R\ne-R$, symmetry gives $|X_R|=|X_{-R}|>h/2$;
thus $X_R+X_R$ also fills the coset indexed by $2R$.
The remaining repeated-fiber sums have $R=-R$ and target the
zero coset. This coset is filled by $X_R+X_{-R}$ if a non-self-inverse
direction exists. Otherwise $Q\setminus\{0\}$ contains at least
two self-inverse directions. At most one of their fibers has size at
most $h/2$, since their total deficit is less than $h$. A fiber of
size greater than $h/2$ then fills $H$ by addition to itself.
Every coset of $2Q$ is therefore full in $2X$. It follows that
\begin{equation}\label{eq:periodic-iterate}
 |jX|=h|jQ|\qquad(j\ge2).
\end{equation}

Let $t$ be the last integer for which $tQ\ne G/H$. Such an integer
exists because $Q$ is proper, contains zero, and generates the
quotient. If $t=1$, \eqref{eq:four-copy-growth} follows at once from
$|X|=2m+1$ and \eqref{eq:periodic-iterate}.
Suppose $t\ge2$. The Vosper property gives
\begin{equation}\label{eq:quotient-last-layer}
 |jQ|\ge ju\quad(1\le j<t),\qquad
 |tQ|\ge tu-1,\qquad tu\le v.
\end{equation}
For completeness, induct on $j<t$. The only alternative to
$|jQ|\ge ju$ in the Vosper bound is $|jQ|=v-1$; then
$jQ+Q=G/H$, contradicting $j+1\le t$. At the last layer,
the same bound gives $|tQ|\ge\min\{v-1,tu\}$. If $tu>v$,
then $|(t-1)Q|+|Q|>v$ would force $tQ=G/H$, also a
contradiction. This proves \eqref{eq:quotient-last-layer}.

For $2\le j<t$, equations \eqref{eq:periodic-iterate} and
\eqref{eq:quotient-last-layer} give $|jX|\ge j|X|$.
The same holds for $j=t$ if $tu\le v-1$. In the only other case,
$tu=v$, equations \eqref{eq:four-copy-kernel} and
\eqref{eq:periodic-iterate} yield
\[
 |tX|\ge h(tu-1)=t|X|+t\delta-h\ge t|X|-1
 \ge2mt+1.
\]
Here $t\ge2$ and $t(h-1)/2-h\ge-1$.
The first layer has size $2m+1$, and the layers after $t$ are full.
This proves \eqref{eq:four-copy-growth}.
\end{proof}

\begin{lemma}\label{lem:four-copy-boundary}
Under the hypotheses of Lemma~\ref{lem:four-copy-growth}, let
$B\subseteq G$ satisfy
\[
 m(m-1)<b=|B|\le |G|/2.
\]
Then
\begin{equation}\label{eq:usable-vosper-boundary}
 \max_{x\in U}\lambda_B(x)>
 m-\frac{m(m-1)}b>m-1.
\end{equation}
In particular, some $x\in U$ has $\lambda_B(x)\ge m$.
\end{lemma}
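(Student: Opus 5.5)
The plan is to double-count $\sum_y\lambda_B(y)$ over a suitable set $Y$: subadditivity along the layers $jX$ of Lemma~\ref{lem:four-copy-growth} bounds the sum from above, and an elementary pair count bounds it from below.

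\textbf{Setup and upper bound.} Put $X=U^*$ and $\lambda=\max_{x\in U}\lambda_B(x)$. Since $\lambda_B(0)=0$ and $\lambda_B(-x)=\lambda_B(x)$ by \eqref{eq:boundary-properties}, every $y\in X$ has $\lambda_B(y)\le\lambda$. For $y\in G$, let $\ell(y)$ be the least $j\ge0$ with $y\in jX$ (with $0X=\{0\}$). Subadditivity in \eqref{eq:boundary-properties} then gives $\lambda_B(y)\le\ell(y)\lambda$ for every $y$. Consequently, for every $Y\subseteq G$,
\[
 \sum_{y\in Y}\lambda_B(y)\le\lambda\sum_{y\in Y}\ell(y).
\]

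\textbf{Lower bound.} Write $\lambda_B(y)=b-|B\cap(B+y)|$. Each pair $(b_1,b_2)\in B^2$ satisfies $b_2=b_1+y$ for exactly one $y$. Hence $\sum_{y\in Y}|B\cap(B+y)|\le b^2$, and so
\[
 \sum_{y\in Y}\lambda_B(y)\ge |Y|b-b^2 .
\]

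\textbf{Choice of $Y$.} I take $Y$ to consist of the $N$ elements of smallest $\ell$-value, with $N$ close to $2b$; this is legitimate because $2b\le|G|$. Lemma~\ref{lem:four-copy-growth} gives $|jX|\ge\min\{|G|,2mj+1\}$, so the layers fill at least as fast as in the extremal profile: one point at level $0$, then $2m$ new points at each level. In that profile $\sum_{y\in Y}\ell(y)$ is maximized, and a short computation gives roughly $N^2/(4m)$ plus a correction of order $N$. Combining the two bounds yields
\[
 \lambda\ge\frac{4m(Nb-b^2)}{N^2}\,(1-\text{error}),
\]
and $N=2b$ makes the main term exactly $m$.

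\textbf{Main obstacle.} The difficulty is the exact bookkeeping needed to reach the strict bound $\lambda>m-m(m-1)/b$ rather than $m-O(1)$. Three corrections must be tracked: the extra point $0$ at level $0$, the $+1$ in $2mj+1$, and the integrality of $N$. I would choose $N=1+2mJ$ for a suitable integer $J$ with $2mJ$ near $2b$, and compute $\sum_{y\in Y}\ell(y)\le 2m\tri J=mJ(J+1)$ exactly.

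This gives
\[
 \lambda\ge\frac{(1+2mJ)b-b^2}{mJ(J+1)}.
\]
Optimizing over $J$ should produce a loss of at most $m(m-1)/b$, the loss coming from the rounding of $b/m$ to an integer. The hypothesis $b>m(m-1)$ enters exactly to make this loss less than $1$, so that $\lambda>m-1$. Integrality of $\lambda$ then gives some $x\in U$ with $\lambda_B(x)\ge m$.

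If this direct optimization falls short by a constant, I would refine the lower bound by also applying $\lambda_B=\lambda_{G\setminus B}$, or by using the complement count $\sum_{y\in G}\lambda_B(y)=b(|G|-b)$.
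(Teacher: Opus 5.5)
There is a genuine gap. Your framework is the same as the paper's: the pair count bounds $\sum_{y\in Y}\lambda_B(y)$ from below, and subadditivity along the layers $jU^*$ bounds it from above. Keeping $0\in Y$ with the $b^2$ count is equivalent to the paper's use of nonzero elements with the $b(b-1)$ count.

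The gap is in how you resolve the integrality issue: you restrict to complete layers, $N=1+2mJ$. Since $Y\subseteq G$, you need $1+2mJ\le|G|$, and the hypotheses allow $|G|=2b$. Your justification ``$2b\le|G|$'' covers only $N\le 2b$. Your expression, however, is maximized beyond $2b$, near $N\approx 2b+m-1$. Under the constraint, the claimed loss bound of $m(m-1)/b$ fails.

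\begin{itemize}
\item With $m=4$, $b=16$, $|G|=32$ you are forced to $J\le 3$. The best your bound gives is $\lambda\ge(25\cdot16-16^2)/(4\cdot3\cdot4)=3=m-1$. This yields neither $\lambda>m-m(m-1)/b=13/4$ nor the integer conclusion $\lambda\ge m$.
\item With $m=4$, $b=43$, $|G|=86$ the best is $1634/440<4-12/43$.
\end{itemize}

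Your fallback ideas ($Y=G$, complementation) are not carried out. They would at least require a case analysis in $|G|$, since $Y=G$ is useless when $|G|$ is large.

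The missing idea is to allow a partial last layer and control it by AM--GM, as the paper does.

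\begin{enumerate}
\item Take $\ell=2b-3\le|G|-3$ nonzero elements of smallest level, and write $\ell=2mR+q$ with $0\le q<2m$.
\item The level sum is at most $2m\tau_R+q(R+1)=(R+1)(mR+q)\le(\ell+m)^2/(4m)$, because $(mR+q)+m(R+1)=\ell+m$. This removes all rounding issues.
\item The pair count gives $b(\ell-b+1)=b(b-2)$. Combining, $\lambda\ge 4mb(b-2)/(2b+m-3)^2=m(1-2/b)/(1+x)^2$ with $x=(m-3)/(2b)>0$; this is where $m\ge4$ is used.
\item Since $(1+x)^{-2}>1-2x$, we get $\lambda>m-m(m-1)/b+2m(m-3)/b^2>m-m(m-1)/b$. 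This exceeds $m-1$ exactly because $b>m(m-1)$.
\end{enumerate}

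In the example $m=4$, $b=16$, $|G|=32$, this gives $\lambda\ge 3584/1089>13/4$, so $\lambda\ge 4$.
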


\begin{proof}
Put $\alpha=\max_{x\in U}\lambda_B(x)$ and $\ell=2b-3$.
We have $1\le\ell\le |G|-1$. Write
$\ell=2mR+q$, where $0\le q<2m$. The growth estimate
\eqref{eq:four-copy-growth} permits us to choose pairwise disjoint
sets $E_j\subseteq jU^*\setminus\{0\}$ with
\[
 |E_j|=2m\ (1\le j\le R),\qquad |E_{R+1}|=q.
\]
Indeed, each ball contains enough nonzero points to add the next
required set. Let $E$ be their union. Counting ordered differences
of distinct elements of $B$ gives
\[
 \sum_{x\in E}\lambda_B(x)
 \ge\ell b-b(b-1)=b(\ell-b+1).
\]
On the other hand, subadditivity of the boundary gives
\begin{align*}
 \sum_{x\in E}\lambda_B(x)
 &\le\alpha\left(2m\sum_{j=1}^Rj+q(R+1)\right)\\
 &=\alpha(R+1)(mR+q)
 \le\frac{\alpha(\ell+m)^2}{4m}.
\end{align*}
The last inequality follows by completing the square in $q$.
Consequently
\[
 \alpha\ge\frac{4mb(b-2)}{(2b+m-3)^2}
 =\frac{m(1-2/b)}{(1+(m-3)/(2b))^2}.
\]
Since $m\ge4$ and $b>m(m-1)$, we may use
$(1+x)^{-2}>1-2x$ for $x>0$ to obtain the following strict bound.
Integrality of $\alpha$ then gives the final assertion.
\begin{align*}
 \alpha
 &>m\left(1-\frac2b\right)\left(1-\frac{m-3}b\right)\\
 &=m-\frac{m(m-1)}b+\frac{2m(m-3)}{b^2}
 >m-\frac{m(m-1)}b>m-1.\qedhere
\end{align*}
\end{proof}

\smallskip
\noindent\textit{Remark.}
Our argument follows \cite[Section~4.2]{BGGH}. In
\href{https://arxiv.org/abs/1112.1929v1}{arXiv:1112.1929v1},
the final comparison in the proof of Lemma~4.6 (p.~18) is incorrect:
in our notation, the product's $1/b$ term is $-m(m-1)/b$, as expanded
above. Lemmas~\ref{lem:four-copy-growth}--\ref{lem:four-copy-boundary}
establish the estimates needed here under the four-copy hypothesis
without invoking \cite[Lemma~4.2]{BGGH}.

\begin{lemma}\label{lem:repetition}
Let $r\ge2$, and let $U$ be an $m$-element set in a finite abelian group.
If $U^{2r}$ is zero-sum-free, then
\begin{equation}\label{eq:repetition}
 \N(U^r)\ge r\tri m.
\end{equation}
\end{lemma}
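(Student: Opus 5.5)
Induction on $m$, building $\Sig(U^r)$ one element at a time and measuring how many new sums each added element contributes. Order $U=\{u_1,\dots,u_m\}$ and write $U_i=\{u_1,\dots,u_i\}$. The target $r\tri m=\sum_{i=1}^m ri$ will follow if adding the $r$ copies of $u_i$ to $U_{i-1}^r$ creates at least $ri$ new nonempty sums, for a suitable choice of the element removed at each step. Going downward: given an $i$-element set $V$, I look for $x\in V$ with
\[
 \N(V^r)-\N((V\setminus\{x\})^r)\ge ri .
\]
Write $B=\Sig((V\setminus\{x\})^r)$. Then $\Sig(V^r)=\bigcup_{s=0}^r(B+sx)$. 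Telescoping gives
\[
 |\Sig(V^r)|-|B|\ \ge\ \sum_{s=1}^r|(B+sx)\setminus(B+(s-1)x)|=r\lambda_B(x),
\]
so it suffices to find $x\in V$ with $\lambda_{B_x}(x)\ge i$. Here $B_x$ depends on $x$, so I will work instead with a single set, for instance $B=\Sig(V^r)$ itself. Removing $x$ from $V$ and comparing, the same telescoping in the reverse direction gives $|\Sig(V^r)|-|\Sig((V\setminus x)^r)|\ge r\lambda$ for the appropriate boundary. Concretely, I would pick $B=\Sig(V^{r})$ and use the fact that $B\setminus(B-x)$ consists of sums using all $r$ copies of $x$.

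Reduction to a generating set. Replace $G$ by $\gen V$. The hypothesis that $V^{2r}$ (hence $V^4$) is zero-sum-free makes $V$ antisymmetric and valid, exactly as in the first paragraph of the proof of Lemma~\ref{lem:four-copy-growth}. It also gives $D\cap(-D)=\{0\}$ for $D=\Sig(V^r)$, since $D+D=\Sig(V^{2r})$. Hence $b=|D|\le|\gen V|/2$, which puts us in the range where the boundary estimates apply.

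Lower bound on the boundary. If $b\le i(i-1)$, then the induction already gives $|D|\ge$ something large, so in that case I would instead argue directly. I expect that either $b>i(i-1)$ holds or the induction hypothesis already exceeds it; note $r\tri{i-1}\ge i(i-1)$ for $r\ge2$, so in fact $b>i(i-1)$ always holds once the induction hypothesis is applied to $V\setminus\{x\}$. With $b>i(i-1)$ and $b\le|G|/2$, the choice of $x$ splits by structure of $V^*$, which has a representation by \cite[Theorem~3.1]{BGGH} (it contains $0$, generates, and is small).
\begin{enumerate}
\item In the Vosper case, Lemma~\ref{lem:four-copy-boundary} gives $x\in V$ with $\lambda_B(x)\ge i$.
\item In the AP case, the quoted consequence of \cite[Proposition~4.1]{BGGH} gives only $|\Sig(V)|\ge\tri i+1$. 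To reach $r$ copies I would use an analogue of the same boundary count, or apply it to obtain $\N(V^r)\ge r\tri i$ via the decomposition into $r$ disjoint layers, using $\Sig(V^r)\supseteq\Sig(V)+\dots$ combined with Kneser \eqref{eq:kneser}, periodicity being excluded by $D\cap(-D)=\{0\}$.
\end{enumerate}
Small $i\le3$ are handled by hand from \eqref{eq:basic} and \eqref{eq:superadditive}.

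Main obstacle. The hard step is the AP case, together with correctly matching the set $B$ whose boundary is measured with the increment actually gained. Lemma~\ref{lem:four-copy-boundary} bounds $\lambda_B(x)$ for a fixed $B$, but the gain from adding $x$ is governed by $\Sig((V\setminus x)^r)$, which varies with $x$. I would first try $B=\Sig(V^r)$: the sums missing all top copies satisfy $|B\setminus(B+x)|\ge\lambda_B(x)$, and peeling one copy of $x$ at a time bounds the loss below by $r$ boundary terms, each computed for a set of the same size class.
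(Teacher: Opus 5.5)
There is a genuine gap. Your outline matches the paper's: induction on $m$, the AP/Vosper dichotomy from \cite[Theorem~3.1]{BGGH}, and Lemma~\ref{lem:four-copy-boundary} applied to the single set $B=\Sig(V^r)$. But the step that turns a boundary bound into a gain of $ri$ is not established.

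Your first displayed inequality is backwards. With $B_0=\Sig((V\setminus\{x\})^r)$ and $B_s=B_0+\{0,x,\dots,sx\}$, each increment $B_s\setminus B_{s-1}$ is contained in $(B_0+sx)\setminus(B_0+(s-1)x)$. So $r\lambda_{B_0}(x)$ is an upper bound for $|\Sig(V^r)|-|B_0|$, not a lower bound. For example, take $V=\{1,2\}$ in a large cyclic group, $x=1$, $r=2$: the gain is $4$, while $2\lambda_{B_0}(1)=6$.

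After you switch to $B=\Sig(V^r)$, your remark about $B\setminus(B-x)$ controls only one layer. Peeling one copy at a time would then need a separate boundary bound for each intermediate $B_j$, and the lemma may supply a different $x$ for each. The missing idea is monotonicity of the increments. Run the chain one step further, to $B_{r+1}$. The increments $d_j=|B_j\setminus B_{j-1}|$ are nonincreasing, because $y\in B_{j+1}\setminus B_j$ implies $y-x\in B_j\setminus B_{j-1}$. Since $B_r=B$, the fictitious last step gives $d_{r+1}=|(B+x)\setminus B|=\lambda_B(x)$. Hence all $r$ real increments are at least $\lambda_B(x)\ge i$, for the one $x$ produced by Lemma~\ref{lem:four-copy-boundary}.

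A second error: $D\cap(-D)=\{0\}$ gives only $2b-1\le n$, not $b\le n/2$. The case $n=2b-1$ does occur, and it is the extremal one. For $V=\{1,\dots,i\}$ in $\mathbb{Z}/(2r\tri i+1)\mathbb{Z}$ one has $\Sig(V^r)=\{0,1,\dots,r\tri i\}$, so $b=r\tri i+1=(n+1)/2$. There Lemma~\ref{lem:four-copy-boundary} cannot be applied to $B$ itself.

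The paper handles $b>n/2$ separately: $B=c-B$ forces $B=\{0,g,\dots,Mg\}$, and a weight count gives $\N(V^r)=M\ge r\tri i$. Alternatively, you can apply the lemma to $G\setminus B$. Since $\lambda_B=\lambda_{G\setminus B}$, this gives the same boundaries. That set has $b-1\le n/2$ elements, and $b-1\ge r\tri{i-1}+r>i(i-1)$ by induction and \eqref{eq:superadditive}.

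Finally, the AP case you call the main obstacle is immediate. $V^r$ is zero-sum-free, so \eqref{eq:superadditive} gives $\N(V^r)\ge r\N(V)\ge r\tri i$, and no Kneser argument is needed. For $i=3$, \eqref{eq:basic} yields only $\N(V)\ge5$. You need the direct count $\N(V)\ge6$: there is at most one coincidence between one- and two-element sums, because no element has order two.
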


\begin{proof}
Fix $r$ and induct on $m$, replacing the ambient group by $\gen U$.
For $m\le2$, $\N(U)\ge\tri m$ is immediate. For $m=3$, the three
one-element sums are distinct, as are the three two-element sums.
There is at most one equality between the two families: two such
equalities would force an element of $U$ to have order two. The total
sum belongs to neither family. Hence $\N(U)\ge6$.
Equation~\eqref{eq:superadditive} proves \eqref{eq:repetition} in these
base cases.

Assume $m\ge4$. The set $U$ is antisymmetric. It is also valid.
Indeed, if $A=\Sig(V)$ had size greater than $|\gen V|/2$ for some
nonempty $V\subseteq U$, then $A+A=\gen V$. A representation of
$-v$ by $V^2$, followed by a third copy of any $v\in V$, would give a
nonempty zero-sum subsequence of $U^{2r}$.

Put $B=\Sig(U^r)$, $b=|B|$, and $n=|G|$. First suppose $b>n/2$.
Since $U^{2r}$ is zero-sum-free,
$B\cap(-B)=\{0\}$. Therefore $n=2b-1$ is odd. Let
$c=\sigma(U^r)$ and $M=b-1$, so $n=2M+1$. Taking complementary
subsequences gives $B=c-B$, and consequently
\begin{equation}\label{eq:dense-alternation}
 B\cup(B-c)=G,\qquad B\cap(B-c)=\{0\}.
\end{equation}
On every orbit of translation by $c$ not containing zero, membership
in $B$ would alternate. Such an orbit has odd length, which is
impossible. Thus $c$ generates $G$. On its one orbit,
\eqref{eq:dense-alternation} determines $B$: with $g=-2c$,
\[
 B=\{0,g,\ldots,Mg\},\qquad c=Mg.
\]
Write the distinct elements of $U$ as $w_1g,\ldots,w_mg$, where
$1\le w_i\le M$. If the integer sum of the weights of $U^r$ first
exceeded $M$ at some prefix, that prefix weight would lie in
$[M+1,2M]$, contradicting the description of $B$. Thus
$r\sum_iw_i\le M$. Its residue is $M$ modulo $2M+1$, so equality
holds. It follows that $\N(U^r)=M\ge r\tri m$.

Now suppose $b\le n/2$. Every zero-sum-free sequence has at least
its length in nonempty sums, so $b\ge rm+1\ge2m+1=|U^*|$.
The representation theorem applies to $U^*$. In the AP case, validity
and the input from Section~2 give $\N(U)\ge\tri m$, and
\eqref{eq:superadditive} finishes the proof.

In the Vosper case, the induction hypothesis gives
\[
 b\ge r\tri{m-1}+1\ge m(m-1)+1.
\]
Also $U^4$ is zero-sum-free, since $r\ge2$.
Thus Lemma~\ref{lem:four-copy-boundary} applies and supplies an
$x\in U$ with $\lambda_B(x)\ge m$. For $0\le j\le r+1$, put
\[
 B_j=\Sig((U\setminus\{x\})^r)+\{0,x,\ldots,jx\}.
\]
The increments $d_j=|B_j\setminus B_{j-1}|$ are nonincreasing:
if $y\in B_{j+1}\setminus B_j$, then $y-x\in B_j\setminus B_{j-1}$.
Indeed, $y$ must come from the new translate, and membership of
$y-x$ in $B_{j-1}$ would imply $y\in B_j$.
Since $B_r=B$ and $d_{r+1}=\lambda_B(x)$, induction yields
\[
 \N(U^r)\ge\N((U\setminus\{x\})^r)+r\lambda_B(x)
 \ge r\tri{m-1}+rm=r\tri m.\qedhere
\]
\end{proof}

\Needspace{14\baselineskip}
\section{Quotient sums and almost full fibers}

The next lemma permits zero relations at the corners of a box of
multiplicities. It excludes them only when some coordinate is strictly
between its two endpoints.

\begin{lemma}\label{lem:capacity}
Let $q_1,\ldots,q_m$ be distinct elements of a finite abelian group,
and let $7\le L_i\le h$ be integers. Suppose
\begin{equation}\label{eq:internal-nonzero}
 \sum_i j_iq_i\ne0
 \quad\text{whenever }0\le j_i\le L_i
 \text{ and some }0<j_i<L_i.
\end{equation}
Let $\mathcal I$ be the set of sums specified in
\eqref{eq:internal-nonzero}, and put $e=\sum_i(h-L_i)$. Then
\begin{equation}\label{eq:capacity}
 |\mathcal I|\ge h\tri m-4me-1.
\end{equation}
\end{lemma}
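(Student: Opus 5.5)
The plan is to exhibit an explicit family of about $h\tri m$ box sums and show that they are pairwise distinct, up to a small exceptional set. Fix the order $q_1,\dots,q_m$. For $1\le i\le k\le m$ and $1\le t\le L_k$, put
\[
 x_{i,k,t}=\sum_{l=i}^{k-1}L_lq_l+tq_k .
\]
Each $x_{i,k,t}$ is a box sum whose coordinate vector is $(0,\dots,0,L_i,\dots,L_{k-1},t,0,\dots,0)$. It lies in $\mathcal I$ whenever $t<L_k$. The number of triples is
\[
 \sum_k kL_k=h\tri m-\sum_k k(h-L_k)\ge h\tri m-me,
\]
so the family has the right size. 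Most of the error budget $4me+1$ is then left to absorb coincidences and the corner vectors with $t=L_k$, which need not belong to $\mathcal I$. For those corner vectors I would shift to $t=L_k-1$, or use the neighbouring interval, to stay inside $\mathcal I$. This costs at most a bounded number of points per index, which is harmless when $e$ is large. When $e$ is small I would instead absorb it into the final $-1$ and the slack in $h\tri m-me$.

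The core step is distinctness. Suppose $x_{i,k,t}=x_{i',k',t'}$. If the two coordinate vectors are comparable coordinatewise, their difference is itself a box vector with entries in $[0,L_l]$. By \eqref{eq:internal-nonzero} it is a nonzero sum unless every entry equals $0$ or $L_l$. So equality can occur only when the difference is a corner vector, that is, when $t$ or $t'$ lies at an endpoint in the coordinate where the two vectors differ. For fixed $(i,k)$ this forces $t\in\{1,L_k-1,L_k\}$ or similar. This happens in $O(1)$ positions per index pair along each "row", and summing over rows gives at most a multiple of $m$ times the relevant lengths. To keep this within $4me$, I would exploit that a corner coincidence can occur at a full coordinate only through $L_l=h$ versus $L_l<h$ slack, so that rows with $L_k=h$ contribute only to the final $-1$.

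The main obstacle is the non-comparable case: for instance $i<i'$ and $k<k'$, where one vector has $L_i$ in coordinate $i$ and zero beyond $k$, while the other is the reverse. Then equality gives $\sum_l a_lq_l=\sum_l b_lq_l$ with $a$ and $b$ box vectors, but $a-b$ has mixed signs. My first attempt would be to cancel the common full block $\sum_{l=i'}^{k-1}L_lq_l$. This reduces the equality to one between two box vectors with \emph{disjoint} supports $[i,i')\cup\{k\}$ and $\{k\}\cup(k,k']$, sharing only coordinate $k$. Then I would add $L_l-b_l$ complementary multiples on the other side's support, or use that $L_l\ge7$ leaves room to shift by a unit. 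The aim is to produce a single box vector with an internal coordinate that sums to zero, contradicting \eqref{eq:internal-nonzero}. If that complementation fails, the fallback is to restrict the family to nested intervals only (fix $k$ and vary $i$), obtaining a chain for each $k$, and to count the intersections between different chains using the constraint $L_i\ge7$. This loses at most $4(h-L_k)$ per coordinate, which is exactly the shape of the term $4me$.
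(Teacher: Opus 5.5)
\paragraph{The gap is the distinctness step.} Bookkeeping cannot close it. Hypothesis \eqref{eq:internal-nonzero} only forbids zero sums of coordinatewise \emph{nonnegative} box vectors that have an internal coordinate. An equality between two members of your family has a difference vector that either has mixed signs (your non-comparable case) or is a corner vector, and neither kind is excluded.

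The complementation move does not produce a forbidden zero. From $\sum_{l\in P}a_lq_l=\sum_{l\in Q}b_lq_l$ with disjoint supports you only get $\sum_{l\in P}a_lq_l+\sum_{l\in Q}(L_l-b_l)q_l=\sum_{l\in Q}L_lq_l$. This vanishes only if a corner relation holds, and the hypothesis never supplies one.

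The comparable case is also misjudged. For $i<i'\le k$, the points $x_{i,k,t}$ and $x_{i',k,t}$ differ by the corner vector supported on $[i,i')$ for \emph{every} $t$. So one permitted relation identifies whole rows, not $O(1)$ points. For example, take $L_1=L_2=h$, $q_1=(1,0)$, $q_2=(0,1)$ in $\mathbb Z/h\times\mathbb Z/N$ with $N>h$. The hypothesis holds, yet $L_1q_1=0$ and $x_{1,2,t}=x_{2,2,t}$ for all $t$. Likewise, all $\tau_m$ points with $t=L_k$ are corner vectors whose values need not lie in $\mathcal I$, while for $e=0$ your entire budget is $1$.

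A concrete failure: in $\mathbb Z/(3h+1)$ with $h$ even, put $q_1=1$, $q_2=2$, $L_1=L_2=h$. Every nonzero box sum lies in $[1,3h]$, so the hypothesis holds and $e=0$. In fact $\mathcal I=\{1,\dots,3h-1\}$. Yet your family takes only the $2h$ values $\{1,\dots,h\}\cup\{h+2,h+4,\dots,3h\}$, and only $5h/2$ values for the reverse ordering. Relations like $2q_1=q_2$ are invisible to the hypothesis, so no staircase family fixed in advance can certify $h\tau_m$ distinct sums.

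\paragraph{The missing idea.} You need a genuine additive-combinatorial lower bound for the number of sums in a block, plus an arrangement in which every cross-block comparison is nonnegative.

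The paper takes an optional block of three copies of the odd-$L_i$ directions, followed by blocks $V_t^2$ with nested $V_t$. Each block's sumset is translated by the total of all preceding blocks. A point of a later block then has a multiplicity vector dominating that of any point of an earlier block. So a collision yields $w$ with $0\le w_i\le L_i$, and \eqref{eq:internal-nonzero} forces $w$ to be a corner. Only the shared endpoints of consecutive blocks and the global endpoints survive.

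The size of each block comes not from the hypothesis but from Lemma~\ref{lem:repetition}: $\N(U^r)\ge r\tau_{|U|}$ when $U^{2r}$ is zero-sum-free. That lemma rests on the AP/Vosper representation theory and Lemma~\ref{lem:four-copy-boundary}. This is where $L_i\ge7$ genuinely enters, since it makes $U^6$ zero-sum-free. Summing the block counts gives $3\tau_o+2\sum_t\tau_{|V_t|}-1\ge h\tau_m-4me-1$.
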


\begin{proof}
The case $m=0$ is immediate, so assume $m\ge1$.
Let $O=\{i:L_i\text{ is odd}\}$, $o=|O|$, and set
\[
 b_0=\begin{cases}h,&h\text{ even},\\h-3,&h\text{ odd},\end{cases}
 \qquad E_i=\min\{b_0,L_i-3\boldsymbol1_{i\in O}\}.
\]
Each $E_i$ is even and at least four. Form an ordered list of sequences:
first three copies of $\{q_i:i\in O\}$, if $O\ne\varnothing$,
and then, for $t=1,\ldots,\max_i E_i/2$, two copies of
\[
 V_t=\{q_i:E_i\ge2t\}.
\]
The sets $V_t$ are nested, and $V_1=V_2=\{q_1,\ldots,q_m\}$.
The total multiplicity used in direction $i$ is
$E_i+3\boldsymbol1_{i\in O}\le L_i$.

For each sequence in this list take its set of sums, including zero,
and translate it by the total sum of all preceding sequences. Consecutive
translated sets have a common endpoint. We check all other intersections.
If a point from a later block equals a point from an earlier block,
subtracting their multiplicity vectors gives $w$ with
$0\le w_i\le L_i$ and $\sum_iw_iq_i=0$. By
\eqref{eq:internal-nonzero}, every $w_i$ is either $0$ or $L_i$.

For consecutive blocks, $w_i\le5<L_i$; hence $w=0$, and the
intersection is precisely their common endpoint. For nonconsecutive
two-copy blocks, if the earlier is not the first two-copy block,
choose a direction active in the later block. It has already occurred
before the earlier block and also occurs in a block strictly between
the two. Thus $0<w_i<L_i$, a contradiction.
If the earlier block is the first two-copy block, the intervening
second two-copy block makes every $w_i$ positive. An odd direction,
if present, has already used three copies and gives $w_i<L_i$.
If no odd direction is present, all $w_i=L_i$, which is possible
only at the global initial and final endpoints. Finally, between the
initial three-copy block and a nonadjacent block, the first two-copy
block again makes all coordinates positive; only the same global
endpoint collision can remain. No intermediate endpoint participates
in this collision, because its multiplicity vector has an internal
coordinate.

The six-copy sequence on the whole support is zero-sum-free, since
$6<L_i$ for all $i$. Lemma~\ref{lem:repetition}, with $r=3$ for
the initial block and $r=2$ for the other blocks, therefore applies.
Let $n_j$ count the nonempty sums of block $j$. Adjacent overlaps
leave $1+\sum_j n_j$ points if the global endpoints are distinct;
deleting both leaves $\sum_j n_j-1$. If they coincide, the union has
one fewer point but only one endpoint is deleted, giving the same
bound. All remaining multiplicity vectors have an internal coordinate.
The repeated-support bounds therefore give
\begin{equation}\label{eq:block-count}
 |\mathcal I|\ge3\tri o+2\sum_t\tri{|V_t|}-1.
\end{equation}

To estimate the right side, note that removing one occurrence of a
direction from a layer decreases its triangular count by at most $m$.
If $h$ is even, $o\le e$ and $\sum_i(h-E_i)=e+3o$, whence
\[
 3\tri o+2\sum_t\tri{|V_t|}
 \ge h\tri m-me-3mo+3\tri o\ge h\tri m-4me.
\]
If $h$ is odd, $m-o\le e$ and
$\sum_i((h-3)-E_i)\le e$. Hence
\[
 3\tri o+2\sum_t\tri{|V_t|}
 \ge(h-3)\tri m-me+3\tri o
 \ge h\tri m-4me,
\]
using $\tri m-\tri o\le m(m-o)$. Combining these inequalities with
\eqref{eq:block-count} proves the lemma.
\end{proof}

\begin{lemma}\label{lem:opposite}
Let $C$ be antisymmetric and zero-sum-free, let $K$ be a subgroup of
order $h$, and put $A=C\cap K$, $a=|A|\ge2$. Write
\[
 C\cap(g+K)=g+X,\qquad C\cap(-g+K)=-g+Y
\]
for two distinct opposite cosets. If $X,Y$ are both nonempty, then
\begin{equation}\label{eq:opposite}
 h-|X|-|Y|\ge a-1.
\end{equation}
\end{lemma}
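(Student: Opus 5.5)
The plan is to turn zero-sum-freeness into a single missing element of a sumset inside $K$, and then apply Kneser's theorem \eqref{eq:kneser}. Write $A=\{a_1,\dots,a_a\}$. For $x\in X$, $y\in Y$ and any $A'\subseteq A$, the elements $g+x$, $-g+y$ together with $A'$ form a nonempty subset of $C$. Its sum is $x+y+\sigma(A')$, which therefore is nonzero. Hence
\[
 0\notin S:=X+Y+\{0,a_1\}+\cdots+\{0,a_a\}\subseteq K .
\]
In particular $S\neq K$. If $J\le K$ denotes the stabilizer of $S$, then $S$ misses a whole $J$-coset, so $|S|\le h-|J|$.

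Next I apply \eqref{eq:kneser} with the $a+2$ summands $X,Y,\{0,a_1\},\dots,\{0,a_a\}$. Let $s$ be the number of $a_i\notin J$, so that $|\{0,a_i\}+J|$ equals $2|J|$ or $|J|$ according as $a_i\notin J$ or $a_i\in J$. Kneser gives
\[
 |S|\ge|X|+|Y|+(a+s)|J|-(a+1)|J|=|X|+|Y|+(s-1)|J|.
\]
Combining this with $|S|\le h-|J|$ yields
\[
 h-|X|-|Y|\ge s|J|.
\]
It then remains to show $s|J|\ge a-1$.

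The set $A\cap J$ has $a-s$ elements and is a zero-sum-free subset of $J$. Its nonempty subset sums are nonzero elements of $J$, so \eqref{eq:basic} gives $|J|\ge 2(a-s)$.
\begin{itemize}
\item If $s=a$, the bound $|J|\ge1$ already gives $s|J|\ge a>a-1$.
\item If $1\le s\le a-1$, then $s|J|\ge 2s(a-s)\ge 2(a-1)\ge a-1$, since $s(a-s)\ge a-1$ on this range.
\end{itemize}

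The main obstacle is the case $s=0$, that is, $A\subseteq J$, where the bound above degenerates to $h-|X|-|Y|\ge0$. Here $S$ is $J$-periodic and avoids $0$, so it avoids all of $J$; thus $X+Y$ is disjoint from $J$. I would first try to rerun Kneser on the shorter sumset $X+Y+\{0,a_1\}+\cdots+\{0,a_{a-1}\}$, which also avoids $0$. If its stabilizer $J'$ omits some chosen $a_i$, the previous computation applies. If all of $a_1,\dots,a_{a-1}$ lie in $J'$, I would combine the two descriptions. Concretely, $X+J$ and $-(Y+J)$ are then disjoint unions of $J$-cosets. Separately, the fiber of $-(Y+J)$ over each coset must contain the translate $-Y-\Sigma(A)$ avoiding $X$. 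Inside a single $J$-coset, $\Sigma(A)$ has at least $2a$ elements by \eqref{eq:basic}, so counting in that coset should recover a deficit of order $a-1$. If this bookkeeping fails, the fallback is the trivial-stabilizer estimate. Applying Kneser to $X+Y$ alone with trivial stabilizer gives $|X+Y|\ge|X|+|Y|-1$. Since $X+Y$ is disjoint from $-\Sigma(A)$ and $|\Sigma(A)|\ge 2a$, this yields $h-|X|-|Y|\ge 2a-1$. I would then reduce the nontrivial-stabilizer situation to a quotient by $J$, where $A$ vanishes, and induct on $h$.
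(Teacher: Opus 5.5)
\textbf{There is a genuine gap: the case $s=0$, which carries the real content of the lemma, is not proved.}

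Your Kneser computation for $s\ge1$ is correct. With the summands $\{0,a_i\}$ you get $h-|X|-|Y|\ge s|J|$, and $|J|\ge2(a-s)$ finishes those cases. This matches the first step of the paper's proof, which excludes $|J|\le e$ and concludes $\Sigma_0(A)\subseteq J$. That conclusion is exactly your case $s=0$, and there you have only a sketch.

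None of the routes you sketch can succeed, because each uses only the constraint $0\notin X+Y+\Sigma_0(A)$. That is zero-sum-freeness of subsets of $C$ with exactly one element from $g+K$ and one from $-g+K$; antisymmetry, $X\cap(-Y)=\varnothing$, is just the special case $0\notin X+Y$. This constraint is compatible with zero deficit. Take $A\subseteq J<K$, put $L=K/J$, choose a nonempty proper subset $\bar X$ of $L$, and let $X$ and $Y$ be the full preimages of $\bar X$ and $-(L\setminus\bar X)$. Then:
\begin{itemize}
\item $X+Y+\Sigma_0(A)=X+Y$ is the preimage of $\bar X+\bar Y$, which misses $0$;
\item yet $|X|+|Y|=h$.
\end{itemize}
So counting inside $J$-cosets cannot recover a deficit of order $a-1$. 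The trivial-stabilizer estimate for $X+Y$ is unavailable precisely in this periodic situation. The proposed induction through $K/J$ also fails: $A$ collapses there, the hypothesis $a\ge2$ is lost, and the same example shows the quotient statement is false.

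The missing idea is to use subsets with \emph{two} elements from each of the opposite cosets. This is how the paper argues:
\begin{enumerate}
\item Assume $e=h-|X|-|Y|<a-1$ and $A\subseteq J$, so $|J|\ge|\Sigma_0(A)|\ge2a>e$.
\item Kneser, in the form with $|X+J|$ and $|Y+J|$, together with divisibility by $|J|$, gives $|X+J|+|Y+J|=h$. Hence $X+J$ and $-(Y+J)$ partition $K$.
\item In $L=K/J$ the complement of $\bar X+\bar Y$ is then the subgroup $\stab_L(\bar X)$. Since $J$ is the full stabilizer of $S$, this subgroup is trivial, so $\bar X+\bar Y=L\setminus\{0\}$.
\item Choose $u\ne0$ and representations $\bar x_1+\bar y_1=u$, $\bar x_2+\bar y_2=-u$. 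Fix $x_2,y_1$ and let $x_1,y_2$ run through their $J$-fibers, which together miss at most $e$ points. This gives at least $|J|-e\ge a+2$ solutions of $x_1+x_2+y_1+y_2=0$.
\item Discarding $x_1=x_2$ and $y_1=y_2$ still leaves a solution. Then $g+x_1$, $g+x_2$, $-g+y_1$, $-g+y_2$ are four distinct elements of $C$ with zero sum, a contradiction.
\end{enumerate}
This four-term relation is what rules out the configuration above, and it is absent from your argument.
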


\begin{proof}
Antisymmetry gives $X\cap(-Y)=\varnothing$. Suppose
$e=h-|X|-|Y|<a-1$. Set $Z=\Sig(A)$, so $|Z|\ge2a$ by
\eqref{eq:basic}. The set $D=Z+X+Y$ avoids zero. For
$J=\stab_K(D)$ and $j=|J|$, Kneser's theorem gives
\[
 |Z+J|+|X+J|+|Y+J|-2j\le h-j,
 \qquad |Z+J|\le e+j.
\]
If $j\le e$, then $|Z|\le2e<2a$, a contradiction. Thus $j>e$,
and divisibility by $j$ gives $Z\subseteq J$ and
$|X+J|+|Y+J|=h$.
Since $D+J=D$ avoids zero and $0\in Z$, we have
$(X+J)\cap(-Y+J)=\varnothing$. These sets therefore partition $K$.
In $L=K/J$ this gives $\bar Y=-(L\setminus\bar X)$. For $u\in L$,
\[
 u\notin\bar X+\bar Y\quad\Longleftrightarrow\quad
 \bar X=u+\bar X.
\]
Thus the complement of $\bar X+\bar Y$ is the subgroup
$P=\stab_L(\bar X)$. This subgroup also stabilizes
$L\setminus P=\bar X+\bar Y$. Since $J$ was the full stabilizer
of $D$, the quotient sumset is aperiodic, so $P$ is trivial.
In particular, $\bar X+\bar Y=L\setminus\{0\}$.

Choose a nonzero $u\in L$ and representations
$\bar x_1+\bar y_1=u$, $\bar x_2+\bar y_2=-u$.
Fix actual $x_2,y_1$, and vary $x_1,y_2$ in their indicated
$J$-fibers. These two fibers together miss at most $e$ points,
so at least $j-e$ pairs solve $x_1+x_2+y_1+y_2=0$ in $K$.
Since $j\ge|Z|\ge2a$ and $e<a-1$, more than two solutions exist.
Excluding $x_1=x_2$ and $y_1=y_2$ eliminates at most two. A
remaining solution gives four distinct elements of $C$ summing to
zero, a contradiction.
\end{proof}

\begin{lemma}\label{lem:fibers}
Let $C$ be antisymmetric and zero-sum-free, $K$ a subgroup of order
$h$, and $A=C\cap K$ with $a=|A|\ge2$. Suppose its occupied
nonzero $K$-cosets are $q_1,\ldots,q_m$, with no self-inverse
coset and no opposite pair. If their fiber sizes $L_i$ satisfy
$L_i\ge7$ and $e=\sum_i(h-L_i)<a-1$, then
\begin{equation}\label{eq:fibers}
 \N(C)\ge h\bigl(h\tri m-4me-1\bigr)+\N(A).
\end{equation}
\end{lemma}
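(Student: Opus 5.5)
Our plan is to build the subset sums of $C$ fiber by fiber over the quotient $G/K$. Write $C=A\cup C'$, where $C'=C\setminus K$. For each $i$, the elements of $C$ in the coset $q_i$ have the form $g_i+X_i$ with $|X_i|=L_i$. We first show that the projection $\pi_K(\Sig(C'))$ contains the set $\mathcal I$ of Lemma~\ref{lem:capacity}, applied in the quotient $G/K$ to the distinct elements $q_1,\ldots,q_m$. Then we show that almost every such quotient sum is \emph{full}, meaning its whole $K$-coset lies in $\Sig(C')+\Sig(A)$. Each full coset contributes $h$ sums. Separately, the sums inside $K$ contribute $\N(A)$, because $\mathcal I$ avoids $0$. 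This gives \eqref{eq:fibers}.

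The first step is to verify hypothesis \eqref{eq:internal-nonzero} for the $q_i$. Suppose $\sum_i j_iq_i=0$ in $G/K$ with $0\le j_i\le L_i$ and some $0<j_{i_0}<L_{i_0}$. Choose $j_i$ distinct elements from the $i$-th fiber, so the chosen elements sum to some $z\in K$. The task is to adjust this choice so that the sum becomes $-w$ for some $w\in\Sig(A)$. That would produce a zero-sum subset of $C$, a contradiction. In the internal direction $i_0$ we may vary which $j_{i_0}$ elements of $X_{i_0}$ are chosen. By \eqref{eq:restricted}, the set $j_{i_0}\wedge X_{i_0}$ has at least $L_{i_0}-1\ge h-e-1>h-a$ elements of $K$. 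Meanwhile $|{-\Sig(A)}|\ge 2a$ by \eqref{eq:basic}. Since $(h-a)+2a>h$, the two sets $z'+j_{i_0}\wedge X_{i_0}$ (with the other directions' contribution $z'$ fixed) and $-\Sig(A)$ must intersect in $K$. This yields the desired zero sum. Hence Lemma~\ref{lem:capacity} gives $|\mathcal I|\ge h\tri m-4me-1$.

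The second step proves that each coset in $\mathcal I$ is full. Fix an internal multiplicity vector and an internal direction $i_0$. The possible sums in that coset include
\[
 z'+j_{i_0}\wedge X_{i_0}+\Sig(A).
\]
The first summand has size at least $h-e-1$, and $\Sig(A)$ has size at least $2a>2(e+1)$. So for every $y\in K$, the sets $y-\Sig(A)$ and $z'+j_{i_0}\wedge X_{i_0}$ must meet, by pigeonhole, since $(h-e-1)+2a>h$. Therefore the coset is full, contributing $h$ elements of $\Sig(C)$. Distinct quotient points give disjoint cosets, and all of them are nonzero cosets.

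Adding the nonempty sums of $A$, which lie in $K$ and so are disjoint from these cosets, gives $\N(C)\ge h|\mathcal I|+\N(A)$, which is \eqref{eq:fibers}. The hypotheses of no self-inverse coset and no opposite pair are exactly what guarantees that the $q_i$ are distinct with no degenerate relations $2q_i=0$ or $q_i+q_j=0$. Such relations would otherwise occur at the small multiplicities used in the blocks; Lemma~\ref{lem:opposite} is what justifies this reduction elsewhere.

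The main obstacle is the first step. One must ensure that the restricted-sum count really supplies $L_{i_0}-1$ distinct values inside the correct coset. One must also handle the case $j_{i_0}$ close to $L_{i_0}$, where \eqref{eq:restricted} still applies since $j<|X|$. Finally, one must check that the inequality $e<a-1$ leaves exactly enough room for the pigeonhole count.
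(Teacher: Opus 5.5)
Your proposal is correct and is essentially the paper's proof. The single pigeonhole estimate $|j\wedge X_i|+|\Sig(A)|\ge(h-e-1)+2a>h$, which comes from \eqref{eq:restricted}, \eqref{eq:basic} and $e<a-1$, shows that every coset indexed by a multiplicity vector with an internal coordinate is completely filled by nonempty sums. That yields both \eqref{eq:internal-nonzero} and the count $h|\mathcal I|+\N(A)$, after which Lemma~\ref{lem:capacity} finishes.

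The only difference is the order. The paper proves the filling first and reads off \eqref{eq:internal-nonzero} as the zero-coset case, so your Step~1 is already contained in Step~2. In Step~2, $y$ should range over the target coset, not over $K$. Your closing remark about the no-self-inverse and no-opposite-pair hypotheses is off the mark but harmless: your Step~1 already excludes such relations, and neither your argument nor the paper's uses those hypotheses.
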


\begin{proof}
For $Z=\Sig(A)$, \eqref{eq:basic} gives $|Z|\ge2a>e+1$.
If $X_i$ is a fiber translated into $K$, then
\eqref{eq:restricted} and $h-L_i\le e$ imply
\[
 Z+(j\wedge X_i)=K\qquad(0<j<L_i).
\]
Consequently every multiplicity vector having an internal coordinate
fills the entire coset of its quotient sum. That quotient sum cannot
be zero, since all these sums use an external element. Thus
\eqref{eq:internal-nonzero} holds. Each point of $\mathcal I$ contributes
$h$ nonempty sums outside $K$, and $A$ contributes $\N(A)$ more.
Apply Lemma~\ref{lem:capacity}.
\end{proof}

\section{An expansion alternative}

\begin{proposition}\label{prop:alternative}
Let $C$ be an antisymmetric zero-sum-free set, $c=|C|$, and
$H=\gen C$. Then at least one of the following holds:
\begin{enumerate}
\item $\N(C)\ge c^2/2$;
\item for every integer $r\ge1$ for which $rC^*\ne H$,
\begin{equation}\label{eq:expansion}
 |rC^*|\ge2r(c-100).
\end{equation}
\end{enumerate}
\end{proposition}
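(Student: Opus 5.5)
We argue by contraposition: assume (ii) fails and derive (i). So fix $r\ge1$ with $rC^*\ne H$ and $|rC^*|<2r(c-100)$, and we aim to show $\N(C)\ge c^2/2$. We may assume $c>100$, since otherwise (ii) holds trivially.

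The first step is to apply Kneser's theorem \eqref{eq:kneser} to $rC^*$. Let $K=\stab(rC^*)$ and $h=|K|$. Since $rC^*\ne H$, $K$ is a proper subgroup of $H$. Kneser gives $|rC^*|\ge r|C^*+K|-(r-1)h$. Write $|C^*+K|=hu$, where $u$ is the number of $K$-cosets met by $C^*$; it is symmetric and contains the zero coset. Comparing with $|rC^*|<2r(c-100)$ gives
\[
 r(hu-h)+h<2r(c-100),\qquad\text{so}\qquad h(u-1)<2(c-100).
\]
The $u-1$ nonzero occupied cosets come in opposite pairs or are self-inverse. Moreover, $C^*$ has $2c+1$ elements spread over these cosets. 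The resulting deficit
\[
 \sum_{\text{cosets}}\bigl(h-|C^*\cap\text{coset}|\bigr)=hu-(2c+1)
\]
is less than $h-201$. Thus $C^*+K$ is almost full: the total deficit is smaller than $h$ by a margin of at least $200$.

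Next we translate this into the setting of Lemma~\ref{lem:fibers}. Put $A=C\cap K$ and $a=|A|$, and let $q_1,\dots,q_m$ be the nonzero cosets met by $C$. The zero coset contains $2a+1$ points of $C^*$, so its deficit is $h-2a-1$. Since the total deficit is less than $h-201$, we get $a>100$. Each nonzero coset pair $\{q,-q\}$ has $C^*$-deficit $2h-|X|-|Y|$ if both fibers are nonempty. By Lemma~\ref{lem:opposite}, $h-|X|-|Y|\ge a-1$ for a genuinely opposite pair of occupied cosets, so such a pair contributes deficit at least $h+a-1$, which is too large. Self-inverse cosets behave similarly: antisymmetry halves the fiber, giving deficit at least about $h/2$. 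I would bound these using the total deficit budget, plausibly showing that they cannot occur when the margin is $200$, or else absorbing them. After this, $C$ occupies $m$ pairwise non-opposite cosets, and in each of them $C^*$ consists of one fiber of $C$ together with its negative. Thus $u=2m+1$ and the deficit is $e_C=\sum_i(h-L_i)$, counted twice, plus $h-2a-1$. The deficit bound then gives $2e+h-2a-1<h-201$, that is, $e<a-100$. This is exactly the hypothesis $e<a-1$ of Lemma~\ref{lem:fibers}. The assumption $L_i\ge7$ may fail for a few fibers. Small fibers, with $L_i\le6$, each cost at least $h-6$ in deficit, so they are few (at most one or so). I would discard them from $C$ and recover them at the end via \eqref{eq:superadditive} and \eqref{eq:basic}.

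Lemma~\ref{lem:fibers} then gives $\N(C)\ge h(h\tri m-4me-1)+\N(A)$. Here $c=a+\sum L_i=a+mh-e$ with $e<a$. The main obstacle is the final arithmetic: showing that $h^2\tri m-4hme-h+\N(A)\ge c^2/2$, using $\N(A)\ge a^2/6$ together with a bound on $a$. Since $a\le h/4$ by validity-type reasoning (as in \eqref{eq:four-copy-kernel}), we have $a\lesssim h/4$. The leading term $h^2m(m+1)/2$ exceeds $(mh+a)^2/2$ by about $h^2m/2-mha$, which is at least $h^2m/4$. The error $4hme\le4hma\le hm\cdot h$ is the dangerous term, so the constant $4$ in \eqref{eq:capacity} must be balanced against the slack. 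Where this fails, I would instead use the cruder bound \eqref{eq:repetition} on the quotient, or the case $m=0$, in which $C\subseteq K$. That case contradicts the properness of $K$ in $H=\gen C$, so it cannot occur. I expect this balancing, including the treatment of self-inverse cosets and the possibility $a<2$, to require the most care.
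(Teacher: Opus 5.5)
Your outline follows the paper's route: Kneser's theorem for $K=\stab(rC^*)$, a deficit budget giving $a>100$ and $e<a-100$, Lemma~\ref{lem:opposite} against opposite pairs, then Lemma~\ref{lem:fibers}. It does not close, because you never obtain the inequality that drives the endgame. Since $\Sig(A)\subseteq K$ and $A$ is zero-sum-free, \eqref{eq:basic} gives $a^2/6\le\N(A)\le h-1$, so $a^2<6h$. Together with $a>100$ this yields $h>1600$ and $a<h/16$.

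Your substitute, $a\le h/4$ ``by validity-type reasoning,'' is not justified here. The validity arguments of Section~3 use that $U^4$ (or $U^{2r}$) is zero-sum-free, which is not assumed for $C$; a merely zero-sum-free $A$ can have $\Sig(A)=K$. It is also too weak. With $c=mh+a-e$, Lemma~\ref{lem:fibers} gives
\[
 \N(C)-\frac{c^2}{2}\ge\frac{mh^2}{2}-mha-3mhe-\frac{(a-e)^2}{2}-h .
\]
Since $e$ may be almost $a$, this is positive only if roughly $a<h/8$. At $a=h/4$ it is about $-mh^2/2$, which the extra $\N(A)\ge a^2/6$ cannot repair. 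You flag this as the main obstacle but leave it to unspecified fallbacks. With $a^2<6h$ it is immediate: $(a-e)^2/2<3h$ and $e<a$ bound the right side below by $\frac{mh}{2}(h-8a-8/m)>0$.

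The same bound is needed for self-inverse cosets, which you leave open with an incorrect heuristic. For a self-inverse coset $g+K$ with $L=|C\cap(g+K)|$, antisymmetry only gives $|C^*\cap(g+K)|=2L\le h$. So its $C^*$-deficit $h-2L$ could a priori be $0$, not ``about $h/2$.'' What excludes it is that $C\cap(g+K)$ is zero-sum-free in the group $K\cup(g+K)$ of order $2h$, so $L<\sqrt{12h}$ by \eqref{eq:basic}. The budget, after removing the zero-coset deficit $h-2a-1$, forces $L>h/2-a+100>h/2-\sqrt{6h}$. These are incompatible for $h>1600$.

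Two smaller corrections. First, an occupied opposite pair has $C^*$-deficit $2(h-|X|-|Y|)$, not $2h-|X|-|Y|$. So the contradiction with Lemma~\ref{lem:opposite} comes from $h-|X|-|Y|<a-100$ after subtracting the zero-coset deficit, not from comparing with $h-201$; your later ``counted twice'' bookkeeping is the correct one. Second, no fiber with $L_i\le 6$ can occur, since $L_i\ge h-e>h-a+100$, so nothing needs to be discarded and recovered by \eqref{eq:superadditive}.
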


\begin{proof}
There is nothing to prove if $c\le100$. Suppose
$rC^*\ne H$ and $|rC^*|<2r(c-100)$. Put
$K=\stab_H(rC^*)$, $h=|K|$, $A=C\cap K$, and $a=|A|$.
Let $p$ count the nonzero $K$-cosets met by $C^*$.
Kneser's theorem gives
\begin{equation}\label{eq:expansion-kn}
 |rC^*|\ge(rp+1)h.
\end{equation}
Here $p\ge2$. Indeed, $p=0$ would imply $K=H$; and if $p=1$,
the sole nonzero quotient element is self-inverse and generates
$H/K$, forcing $rC^*=H$.
Antisymmetry also gives $2(c-a)\le ph$. Thus, writing
\[
 e=\frac{ph}{2}-(c-a)\ge0,
\]
the assumed failure and \eqref{eq:expansion-kn} imply
\begin{equation}\label{eq:kernel-budget}
 a>100,\qquad e<a-100,\qquad a^2<6h,
 \qquad h>1600,\qquad a<h/16.
\end{equation}
The third inequality follows from $\N(A)+1\le h$ and
\eqref{eq:basic}; the last two follow from $a>100$ and $a^2<6h$.

No distinct opposite pair of external cosets can both be occupied.
Otherwise its contribution to $e$ is at least $a-1$ by
Lemma~\ref{lem:opposite}, contradicting \eqref{eq:kernel-budget}.
There is no occupied self-inverse nonzero coset either. If one has
size $L$, its elements lie in a subgroup of order $2h$, so
\eqref{eq:basic} gives $L<\sqrt{12h}$. Its contribution to $e$ is
$h/2-L$, whereas $e<a$ gives
\[
 L>h/2-a>h/2-\sqrt{6h}>\sqrt{12h}
 \quad(h>1600),
\]
a contradiction.

We may therefore write $p=2m$, with one occupied coset per opposite
pair. If their sizes are $L_i$, then
\[
 e=\sum_i(h-L_i),\qquad c=mh+a-e,\qquad
 L_i\ge h-e>h-a+100>7.
\]
Lemma~\ref{lem:fibers} applies. After dropping its nonnegative
$\N(A)$ term, we obtain
\begin{align*}
 \N(C)-\frac{c^2}{2}
 &\ge \frac{mh^2}{2}-mha-3mhe-\frac{(a-e)^2}{2}-h\\
 &>\frac{mh^2}{2}-4mha-4h
   =\frac{mh}{2}\left(h-8a-\frac8m\right)>0.
\end{align*}
The last inequality uses $a<h/16$ and $h>1600$.
Thus every failure of \eqref{eq:expansion} gives the first alternative.
\end{proof}

\section{From expansion to subset sums}

\begin{lemma}\label{lem:exp-boundary}
Let $C\subseteq G$, $H=\gen C$, and $d>0$. Suppose, for every
$r\ge1$, either $rC^*=H$ or $|rC^*|\ge2rd$.
If $b=\min\{|B|,|G\setminus B|\}>d$ and $|H|\ge2b$, then
\begin{equation}\label{eq:exp-boundary}
 \max_{x\in C}\lambda_B(x)>d-d^2/b.
\end{equation}
\end{lemma}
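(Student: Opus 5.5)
The plan is to repeat the double-counting argument of Lemma~\ref{lem:four-copy-boundary}, with the growth estimate \eqref{eq:four-copy-growth} replaced by the expansion hypothesis. Put $\alpha=\max_{x\in C}\lambda_B(x)$.

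\emph{Reduction.} By \eqref{eq:boundary-properties}, $\lambda_B=\lambda_{G\setminus B}$, so we may replace $B$ by its complement if necessary and assume $|B|=b$.

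\emph{Two bounds for $\lambda_B$ on sums.} Every $x\in rC^*$ is a sum of at most $r$ elements of $\pm C$. By symmetry and subadditivity in \eqref{eq:boundary-properties}, $\lambda_B(x)\le r\alpha$ for such $x$. For the lower bound, count ordered pairs $(y,y')$ of elements of $B$ with $y'-y=x$. Summing over any set $E\subseteq H\setminus\{0\}$ gives
\[
\sum_{x\in E}\lambda_B(x)=\sum_{x\in E}\bigl(b-|B\cap(B+x)|\bigr)\ge |E|\,b-b(b-1).
\]

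\emph{Choice of layers.} I take $|E|=\ell$ with $\ell$ close to $2b$; as in Lemma~\ref{lem:four-copy-boundary}, $\ell=2b-1$ or $2b-3$. This needs $\ell\le|H|-1$, which holds since $|H|\ge2b$. Write $\ell=2dR+q$. The expansion hypothesis lets me choose disjoint sets $E_j\subseteq jC^*\setminus\{0\}$ with $|E_j|=2d$ for $j\le R$ and $|E_{R+1}|=q$.

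\emph{Main obstacle: fitting the layers.} $d$ need not be an integer, so I either round or work with $\lfloor 2d\rfloor$ carefully. For each $j$, either $jC^*=H$, in which case there is plenty of room since $\ell<|H|$. Or $|jC^*|\ge2jd$, so $jC^*\setminus\{0\}$ has at least $2jd-1$ points. The earlier layers occupy only $2d(j-1)$ points, so the new layer fits as long as $2d-1\ge$ what is needed. The off-by-one from excluding $0$ must be absorbed. I would allow layer $j$ to have size $\lfloor 2d\rfloor$, or more simply take layer sizes $s_j$ with $\sum_{i\le j}s_i\le|jC^*|-1$. The weighted sum $\sum j s_j$ is then bounded by the same quadratic, with slight slack from choosing $\ell$ smaller.

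\emph{Quadratic comparison.} Combining the two bounds,
\[
b(\ell-b+1)\le\alpha\sum_j j|E_j|\le\alpha\,\frac{(\ell+d)^2}{4d}
\]
after completing the square in $q$. With $\ell$ close to $2b$ this gives roughly
\[
\alpha\ge\frac{4db^2}{(2b+d)^2}=\frac{d}{(1+d/(2b))^2}>d\Bigl(1-\frac db\Bigr)=d-\frac{d^2}b,
\]
using $(1+x)^{-2}>1-2x$. The strict inequality survives the rounding losses only if $\ell$ is chosen well. I expect this rounding bookkeeping to be the delicate step. I would first try $\ell=2b-1$, using all nonzero differences available in $H$, and check that the correction terms are dominated by the gain $d^2/b-(\text{exact loss})$. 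This mirrors the $2m(m-3)/b^2$ slack in Lemma~\ref{lem:four-copy-boundary}.
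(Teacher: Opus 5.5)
Your approach works, and the rounding you flag closes exactly once the excluded zero is handled. The hypothesis gives only $|jC^*|\ge 2jd$, not $2jd+1$ as in Lemma~\ref{lem:four-copy-growth}. So take cumulative layer sizes $S_j=\min\{\ell,\lceil 2jd\rceil-1\}$. These fit in $jC^*\setminus\{0\}$ in both cases of the hypothesis; when $jC^*=H$ this uses $\ell\le|H|-1$. Summation by parts gives
\[
 \sum_j j|E_j|=\sum_{j\ge0}(\ell-S_j)\le\sum_{j\ge0}(\ell+1-2jd)_+\le\frac{(\ell+1+d)^2}{4d},
\]
so the factor $(\ell+d)^2/(4d)$ in your display must be $(\ell+1+d)^2/(4d)$. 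With $\ell=2b-1$ the double count reads exactly $b^2\le\alpha(2b+d)^2/(4d)$. So the bound you wrote as ``roughly'' holds exactly, and $(1+x)^{-2}>1-2x$ gives the strict inequality with no further correction terms. The choice of $\ell$ matters: $\ell=2b-3$, as in Lemma~\ref{lem:four-copy-boundary}, yields $\alpha\ge4db(b-2)/(2b+d-2)^2$. For $d<2$ and large $b$ this is slightly below $d-d^2/b$, so that choice does not prove the lemma for all real $d$.

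The paper runs the same double count in dual form, by contradiction. If every boundary is at most $M=d-d^2/b$, then $\rho(x)=|B\cap(B+x)|\ge b-rM$ on $rC^*$. The paper integrates the level sets of $\rho$ over consecutive intervals of length $M$, using $|rC^*|\ge2rd$ for $r\le R=\lfloor b/d\rfloor$ and $|(R+1)C^*|\ge2b$. This gives $\sum_{x\in H}\rho(x)\ge dMR(R+1)+2b\bigl(b-(R+1)M\bigr)>b^2$, contradicting $\sum_{x\in H}\rho(x)\le b^2$. It integrates in the value variable with real step $M$ and keeps $x=0$ in the sum, so it never rounds layer sizes or loses a point; the cost is an explicit algebraic identity in $b/d$ at the end. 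Your point-selection version reuses the template of Lemma~\ref{lem:four-copy-boundary} directly. It yields the explicit, slightly stronger bound $\alpha\ge4db^2/(2b+d)^2$, at the price of the integer bookkeeping above.
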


\begin{proof}
By complementation assume $|B|=b$. Suppose all boundaries are at
most $M=d-d^2/b>0$. Put $\rho(x)=|B\cap(B+x)|$.
For $x\in rC^*$, subadditivity gives $\rho(x)\ge b-rM$, whereas
\begin{equation}\label{eq:correlation}
 \sum_{x\in H}\rho(x)=\sum_{Q\in G/H}|B\cap Q|^2\le b^2.
\end{equation}
Write $x_0=b/d=R+\theta$, with $R=\lfloor x_0\rfloor$ and
$0\le\theta<1$. The hypotheses imply
$|rC^*|\ge2rd$ for $1\le r\le R$ and
$|(R+1)C^*|\ge2b$. Also $t_0=b-(R+1)M>0$.
Integrating the level sets of $\rho$ over the consecutive intervals
of length $M$, and then over $[0,t_0]$, yields
\[
 \sum_{x\in H}\rho(x)\ge dMR(R+1)+2bt_0.
\]
The right side, divided by $b^2$, equals
\[
 1+\frac{1-\theta+\theta^2+\theta(1-\theta)/x_0}{x_0^2}>1,
\]
contradicting \eqref{eq:correlation}.
\end{proof}

We now prove the global estimate for antisymmetric sets. For $s\ge1$
write
\[
 L(s)=\lceil\log_{3/2}s\rceil,\qquad
 \calH_s=\sum_{j=1}^s\frac1j.
\]

\begin{lemma}\label{lem:quarter}
Every antisymmetric zero-sum-free set $T$ of size $r\ge1$ satisfies
\begin{equation}\label{eq:quarter}
 \N(T)\ge\frac{r^2}{4}-\frac{r+1}{2}L(r)-1.
\end{equation}
\end{lemma}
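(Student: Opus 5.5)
I would prove \eqref{eq:quarter} by strong induction on $r$, using a greedy peeling that removes, at each stage, a block of roughly one third of the current set. The factor $L(r)=\lceil\log_{3/2}r\rceil$ suggests that each stage keeps at least two thirds of the elements. It also suggests that each stage loses about $(r+1)/2$ against the target $r^2/4$. For small $r$ (say $r\le 3$) the right side of \eqref{eq:quarter} is nonpositive, or it is handled directly by \eqref{eq:basic}, so those cases are trivial.

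For the inductive step I would use Olson's boundary estimate \eqref{eq:olson-boundary} to add elements one at a time. Order $T$ greedily as follows. Given the set of sums $B_j=\Sig(T_j)$ of the elements already chosen, the remaining set $C_j=T\setminus T_j$ is antisymmetric. Choose $x\in C_j$ maximizing $\lambda_{B_j}(x)$, and append it. Then $|\Sig(T_j x)|\ge |B_j|+\lambda_{B_j}(x)$, since $B_j\cup(B_j+x)\subseteq\Sig(T_jx)$. By \eqref{eq:olson-boundary} this increment is at least $\frac12\min\{b_j+1,|C_j|+1\}$, where $b_j=\min\{|B_j|,|G\setminus B_j|\}$. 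The key hypothesis to check is $|\gen{C_j}|\ge 2b_j$.

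\begin{enumerate}
\item If $b_j=|B_j|\ge |C_j|$, the increment is at least $(|C_j|+1)/2=(r-j+1)/2$. Summing over the whole greedy run gives about $\sum_j (r-j+1)/2\approx r^2/4$, which is exactly the leading term.
\item The branch $b_j<|C_j|$ only occurs while $|B_j|$ is small. Since $|B_j|\ge 2j$ by \eqref{eq:basic}, this phase ends after a few steps. Alternatively, one can use $b_j+1$ increments, which double $|B_j|$ quickly.
\end{enumerate}

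The main obstacle is the failure of the hypothesis $|\gen{C_j}|\ge 2b_j$. This happens when the remaining elements generate a proper subgroup $H_j$ that is small compared with the set of sums already built. This is where the stopping rule, and hence the logarithmic loss, enters. At the first such stage I would stop the greedy procedure and pass to the quotient. Zero-sum-freeness of $T$ makes $\Sig(T_j)$ meet each $H_j$-coset in a controlled way, so $\N(T)\ge \N(T_j)+\N(C_j)$ by \eqref{eq:superadditive}. Here $\N(C_j)$ is bounded below by the induction hypothesis, applied to $C_j$ inside $H_j$.

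To keep the recursion to depth $L(r)$, I would arrange that the tail at a stop has size at most $2r/3$. If the tail is larger, its subgroup $H_j$ has order at least $2\N(C_j)$ roughly. Since $\N(C_j)\ge |C_j|^2/6$ by \eqref{eq:basic}, this forces $|H_j|$ to be large, so the hypothesis cannot fail that early. Each level of the recursion then costs at most $(r+1)/2$ plus rounding, from the gap between $\sum_j(r-j+1)/2$ and the split $r_1^2/4+r_2^2/4\le r^2/4-r_1r_2/2$. The cross term $r_1r_2/2$ must be recovered from the greedy increments on the first part, which are at least $(r-j+1)/2$ and so include the contribution of $r_2$. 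The remaining per-level loss is absorbed into $\frac{r+1}{2}L(r)$, using $L(r)\ge L(2r/3)+1$. Verifying this bookkeeping, and the "tail at most $2r/3$" claim, is where I expect the real work.
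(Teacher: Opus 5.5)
Your plan has a genuine gap, and it lies in the bookkeeping you deferred: the recursion cannot close, because the term $\frac{r+1}{2}L(r)$ does not come from recursion depth.

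\textbf{Where the logarithmic loss comes from.} The term $\frac{r+1}{2}L(r)$ is the cost of the initial multiplicative phase of a \emph{single} greedy run. While $|B_j|<|C_j|$, Olson's estimate only gives $|B_{j+1}|+1\ge\frac32(|B_j|+1)$. So this phase can last about $\log_{3/2}r$ steps; the bound $|B_j|\ge2j$ would only end it near $j=r/3$. In each such step the increment can fall short of the ideal $(r-j+1)/2$ by nearly $r/2$. Hence the top-level run already uses the whole budget $\frac{r+1}{2}L(r)$, up to a slack of order $r$.

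\textbf{Why the recursion fails.} If at a stop you bound $\N(C_j)$ by the induction hypothesis, the tail costs a second, independent loss $\frac{r_2+1}{2}L(r_2)+1$. For $r_2\approx 2r/3$ this is of order $r\log r$, far more than the slack. A per-level loss of $(r+1)/2$ is simply not available.

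\textbf{The tail need not be small.} The claim that the tail has at most $2r/3$ elements is also unfounded. $|B_j|$ is only bounded from below and may be as large as $2^j$, so the stopping condition can fire after $O(\log r)$ steps. For example, take $j$ independent elements of order $3$ together with $\{1,\dots,t\}$ in a cyclic factor of prime order $p$, where $2^j<p<2^{j+1}$ and $\tri t<p$. If ties are broken in favor of the order-$3$ elements, the run stops after $j$ steps with a tail of size $t=r-O(\log r)$.

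\textbf{The missing idea: a coset doubling that makes induction unnecessary.} Suppose a stop occurs after $j$ elements have been chosen. Then $H=\gen{C_j}$ is proper in $G=\gen T$, since the stopping inequality is impossible for $H=G$. Let $v\le j$ be the last position with $a_v\notin H$, and put $T'=\{a_{v+1},\dots,a_r\}\subseteq H$ and $t=r-v$.
\begin{itemize}
\item The sets $\Sig(T')$ and $a_v+\Sig(T')$ lie in different $H$-cosets, so $\N(a_vT')=2\N(T')+1$.
\item Then \eqref{eq:superadditive} gives $\N(T)\ge|B_{v-1}|+2\N(T')$.
\item By \eqref{eq:basic}, $2\N(T')\ge t^2/3$.
\item The greedy prefix bound at $v-1$ falls short of $r^2/4$ only by $(t+1)^2/4$ plus the one initial-phase loss $\frac{r+1}{2}L(r)$.
\item Since $t^2/3-(t+1)^2/4=(t-3)^2/12-1\ge-1$, the tail is paid for.
\end{itemize}
Plain superadditivity $\N(T)\ge\N(T_j)+\N(C_j)$, as in your plan, discards this factor $2$. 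Without it, the tail can only be paid for by a fresh greedy run with its own logarithmic loss.

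\textbf{A smaller omission.} When $b_j=|G\setminus B_j|$, you need that the minimum in Olson's bound is still at least $|C_j|+1$. This holds because the prefix sums avoid the negatives of the nonempty tail sums, so $|G\setminus B_j|\ge\N(C_j)\ge2|C_j|-1$.
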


\begin{proof}
Arrange $T=\{a_1,\ldots,a_r\}$ greedily, always choosing the largest
boundary relative to the current prefix sumset. Let
$B_j=\Sig(\{a_1,\ldots,a_j\})$ and $\sigma_j=|B_j|$, with
$\sigma_0=1$. Before step $i$, set
$C_i=\{a_i,\ldots,a_r\}$, $c_i=r-i+1$, and $H_i=\gen{C_i}$.
Stop before the first step for which
\begin{equation}\label{eq:capacity-stop}
 |H_i|<2\min\{\sigma_{i-1},|G\setminus B_{i-1}|\},
 \qquad G=\gen T.
\end{equation}
Before stopping, zero-sum-freeness and \eqref{eq:basic} imply
$|G\setminus B_{i-1}|\ge\N(C_i)\ge2c_i-1$. Hence
\eqref{eq:olson-boundary} gives
\begin{equation}\label{eq:weak-growth}
 \sigma_i-\sigma_{i-1}\ge\frac12\min\{\sigma_{i-1}+1,c_i+1\}.
\end{equation}
While $\sigma_{i-1}<c_i$, the quantity $\sigma_i+1$ increases by
a factor at least $3/2$. This regime has ended by $L(r)$ steps if
the process has not already stopped. Once it ends, it does not
restart. Summing the lower bounds $(c_i+1)/2$ thereafter, and
discarding the first $L(r)$ ideal increments, gives, for each
available prefix,
\begin{equation}\label{eq:quarter-prefix}
 \sigma_j\ge\frac{r^2-(r-j)^2}{4}-\frac{r+1}{2}L(r).
\end{equation}
For $j\le L(r)$ the right side is nonpositive, so it also covers
an early stop.

If there is no stop, subtract one from \eqref{eq:quarter-prefix}
at $j=r$. Otherwise the stopping subgroup $H$ is proper in $G$.
Let $v$ be the last position with $a_v\notin H$, and put
$T'=\{a_{v+1},\ldots,a_r\}$, $t=r-v$. Both $v$ and $v-1$
precede the stopping position. Since $T'\subseteq H$ and $a_v\notin H$,
the two parts of
\[
 \Sig(a_vT')=\Sig(T')\sqcup\bigl(a_v+\Sig(T')\bigr)
\]
lie in distinct $H$-cosets. Hence $\N(a_vT')=2\N(T')+1$, and
\eqref{eq:superadditive} gives
\[
 \N(T)\ge\sigma_{v-1}+2\N(T')
 \ge\frac{r^2-(t+1)^2}{4}-\frac{r+1}{2}L(r)+\frac{t^2}{3}.
\]
Finally, $t^2/3-(t+1)^2/4=(t-3)^2/12-1\ge-1$.
\end{proof}

\begin{proposition}\label{prop:antisymmetric}
Every antisymmetric zero-sum-free set $S$ of size $s\ge1$ satisfies
\begin{equation}\label{eq:antisymmetric}
 \N(S)\ge\frac{s^2}{2}-10s\log(s+1)-116s.
\end{equation}
\end{proposition}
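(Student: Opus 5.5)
We prove Proposition~\ref{prop:antisymmetric} with a greedy ordering modeled on the proof of Lemma~\ref{lem:quarter}. The growth lower bound at each step is supplied by Proposition~\ref{prop:alternative} and Lemma~\ref{lem:exp-boundary}, in place of Olson's estimate \eqref{eq:olson-boundary}.

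\textbf{Setup.} Order $S=\{a_1,\ldots,a_s\}$ greedily: at each step choose the remaining element with the largest boundary relative to the current prefix sumset $B_{i-1}=\Sig(\{a_1,\ldots,a_{i-1}\})$. As before, write $\sigma_i=|B_i|$, $C_i=\{a_i,\ldots,a_s\}$, $c_i=s-i+1$ and $H_i=\gen{C_i}$. We stop at the first $i$ for which $|H_i|<2\min\{\sigma_{i-1},|G\setminus B_{i-1}|\}$.

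\textbf{Growth at each step.} Before stopping, apply Proposition~\ref{prop:alternative} to the tail $C_i$, which is antisymmetric and zero-sum-free.
\begin{enumerate}
\item If alternative (i) holds, then $\N(C_i)\ge c_i^2/2$. By \eqref{eq:superadditive} applied to the prefix and $C_i$, we get $\N(S)\ge\sigma_{i-1}-1+c_i^2/2$, and we stop there.
\item Otherwise the tail satisfies the hypothesis of Lemma~\ref{lem:exp-boundary} with $d=c_i-100$. Whenever $b_{i-1}=\min\{\sigma_{i-1},|G\setminus B_{i-1}|\}>d$, this gives
\[
\sigma_i-\sigma_{i-1}\ge d-d^2/b_{i-1}.
\]
The complement is at least $\N(C_i)\ge 2c_i-1$, as in the proof of Lemma~\ref{lem:quarter}.
\end{enumerate}

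\textbf{Summing the increments.} Split the run into two phases.
\begin{enumerate}
\item \emph{Early phase.} While $\sigma_{i-1}$ is not yet much larger than $c_i$, use \eqref{eq:olson-boundary}: the sumset grows geometrically, so this phase lasts $O(\log s)$ steps and loses $O(s\log s)$.
\item \emph{Late phase.} Once $\sigma_{i-1}\gtrsim c_i^2$, the quantity $\sigma$ is at least of order $i\cdot s$. The per-step loss $d^2/\sigma_{i-1}$ then contributes a sum like $\sum_i c_i^2/(\text{const}+s\,i)$, which is $O(s\log s)$ through a harmonic sum $\calH_s$. This is presumably where the constant $10$ comes from.
\end{enumerate}
The term $-100$ in $d$ costs $100s$, and the constants absorb the rest, up to $116s$.

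The intermediate regime is the delicate part. There $\sigma_{i-1}$ is of order $c_i$ up to $c_i^2$, and the bound $d-d^2/b$ is weak. To handle it, I would first run Lemma~\ref{lem:quarter}-type growth so that $\sigma$ reaches $s^2/4$-ish. A cleaner option is to compare with the ideal triangular profile and show that the deficit $D_i=\tri{i}$-type target $-\sigma_i$ satisfies a recursion of the form $D_i\le D_{i-1}(1-O(1/i))+O(1)+c^2/\sigma$. Solving this recursion gives the logarithm.

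\textbf{Stopping case.} Argue exactly as in Lemma~\ref{lem:quarter}. Let $v$ be the last position outside the stopping subgroup $H$, and $T'$ the remaining tail of size $t$. We get $\N(S)\ge\sigma_{v-1}+2\N(T')$. Bound $\N(T')$ by induction on $s$ using \eqref{eq:antisymmetric} itself, since $t<s$. The factor $2$ on $t^2/2$ then more than compensates for the prefix bound, which only covers $s^2/2-(t+1)^2/2$ minus the error. I expect the bookkeeping to work because $\log(t+1)\le\log(s+1)$ and $2\cdot(10t\log(t+1)+116t)$ must fit within the error budget; this may require the factor-$2$ gain $t^2/2$ to absorb the doubled error when $t$ is large. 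For small $t$, Lemma~\ref{lem:quarter} suffices.

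\textbf{Main obstacle.} The hardest step is the summation of the increments with losses $d^2/b$ through the intermediate range where $b$ is comparable to $d$. The ideal profile does not match the actual $\sigma$ there, so a careful induction on the deficit is needed to keep the total loss at $O(s\log s)$ rather than $O(s^{3/2})$.
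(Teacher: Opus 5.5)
There is a genuine gap, and it is the step you yourself flag as the main obstacle: you never bound the accumulated losses $\sum_i d^2/b_{i-1}$, and neither remedy you sketch does so. If you credit only the weak increments $\approx c_i/2$ until $\sigma$ is of order $s^2/4$, that takes a positive proportion of the steps and loses order $s^2$. The deficit recursion with a contraction factor $1-O(1/i)$ has no mechanism behind it. The missing observation is that $\sigma$ never needs to be quadratically large. The Olson estimate \eqref{eq:weak-growth} and the strong estimate hold at every step simultaneously, so use the weak one only to control the denominator. The multiplicative regime ends by step $L=L(s)$. After that, each increment at a step $j<i$ is at least $(c_j+1)/2\ge(c_i+2)/2$, so
\[
 \sigma_{i-1}\ge\frac{(i-1-L)(c_i+2)}{2},\qquad
 \frac{c_i^2}{\sigma_{i-1}}\le\frac{2s}{i-1-L}\qquad(i\ge L+2).
\]
Summing $\sigma_i-\sigma_{i-1}\ge c_i-106-c_i^2/\sigma_{i-1}$ therefore loses only $2s\calH_s$ from these terms, plus $s(L+1)$ for the discarded initial steps. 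There is no separate intermediate regime: a step with $c_i^2/\sigma_{i-1}\ge\varepsilon c_i$ forces $i-1-L<2/\varepsilon$.

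Second, your summation treats only the loss $d^2/\sigma_{i-1}$, but $b_{i-1}$ may be the complement $|G\setminus B_{i-1}|$. With only the bound $|G\setminus B_{i-1}|\ge2c_i-1$ that you quote, the loss $d^2/b$ can be about $c_i/2$ per step, which is quadratic in total. You need $|G\setminus B_{i-1}|\ge\N(C_i)\ge c_i^2/6$ from \eqref{eq:basic}. This caps that loss at $6$. It also shows that $b\le d$ forces $b=\sigma_{i-1}$, where the strong bound is vacuous anyway.

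Finally, in the subgroup-capacity stop, induction on \eqref{eq:antisymmetric} itself closes only once $t^2/2$ outweighs the doubled error $20t\log(t+1)+232t$, which fails for $t$ up to several hundred. The induction is also unnecessary: Lemma~\ref{lem:quarter} works for every $t$, since $2\cdot t^2/4$ cancels the missing $(t+1)^2/2$ in the prefix bound up to $O(t)$, with error only $(t+1)L(t)+2$.
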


\begin{proof}
Use the same greedy notation, now in $G=\gen S$. Stop before the
first step $i$ for which either
\begin{equation}\label{eq:two-stops}
 \N(C_i)\ge c_i^2/2
 \quad\text{or}\quad
 |H_i|<2\min\{\sigma_{i-1},|G\setminus B_{i-1}|\}.
\end{equation}
Give the first condition priority if both hold. Before stopping,
\begin{equation}\label{eq:complement}
 |G\setminus B_{i-1}|\ge\N(C_i)
 \ge\max\{2c_i-1,c_i^2/6\}.
\end{equation}
The first inequality follows because the prefix sumset avoids the
negative nonempty tail sums. Thus \eqref{eq:weak-growth} remains valid.

Let $c=c_i$, $\sigma=\sigma_{i-1}$, and
$b=\min\{\sigma,|G\setminus B_{i-1}|\}$. By
Proposition~\ref{prop:alternative} and Lemma~\ref{lem:exp-boundary},
with $d=c-100$, the actual increment satisfies
\begin{equation}\label{eq:strong-growth}
 \sigma_i-\sigma_{i-1}\ge c-106-\frac{c^2}{\sigma}.
\end{equation}
For clarity, if $d\le0$ this is automatic. If $d>0$ and $b\le d$,
\eqref{eq:complement} forces $b=\sigma$, and the right side is
negative. Otherwise Lemma~\ref{lem:exp-boundary} applies. When
$b=\sigma$, it gives the stronger $c-100-c^2/\sigma$; when
$b=|G\setminus B_{i-1}|$, its error $d^2/b$ is at most six by
\eqref{eq:complement}. This proves \eqref{eq:strong-growth} in
every case.

Put $L=L(s)$. After the first $L+1$ steps, weak growth gives, for
each available $i\ge L+2$,
\begin{equation}\label{eq:harmonic}
 \sigma_{i-1}\ge\frac{(i-1-L)(c_i+2)}2,
 \qquad \frac{c_i^2}{\sigma_{i-1}}
       \le\frac{2s}{i-1-L}.
\end{equation}
Indeed, by step $L$ the initial multiplicative regime has ended.
For $L+1\le j\le i-1$, the weak increment at step $j$ is at
least $(c_j+1)/2\ge(c_i+2)/2$. Summing these $i-1-L$
increments gives the first inequality. If $L+2>s$, there are no
such steps.
Discard the first $L+1$ ideal increments at cost at most $s(L+1)$,
and sum \eqref{eq:strong-growth} and \eqref{eq:harmonic}. Since
\[
 \sum_{i=1}^j c_i\ge\frac{s^2-(s-j)^2}{2},
\]
every prefix formed before stopping obeys
\begin{equation}\label{eq:half-prefix}
 \sigma_j\ge\frac{s^2-(s-j)^2}{2}
       -s\bigl(L+107+2\calH_s\bigr).
\end{equation}
Its right side is nonpositive during the discarded initial steps,
so an early stop is included.

If the first condition in \eqref{eq:two-stops} occurs, apply
\eqref{eq:superadditive} to prefix and tail. The tail contribution
$c_i^2/2$ cancels the missing square in \eqref{eq:half-prefix}.
If no stop occurs, use $j=s$ and subtract the empty sum.

For a stop caused only by subgroup capacity, let $H$ be the stopping
subgroup, $v$ the last position outside $H$, and $T=S_{>v}$,
$r=s-v$. As in Lemma~\ref{lem:quarter},
\[
 \N(S)\ge\sigma_{v-1}+2\N(T).
\]
Apply \eqref{eq:half-prefix} at $v-1$ and
\eqref{eq:quarter} to $T$. For $r=0$ use $\N(T)=0$ directly.
Since $r+1\le s$ and $L(r)\le L(s)$ when $r\ge1$, all cases give
the convenient common bound
\begin{equation}\label{eq:explicit-transfer}
 \N(S)\ge\frac{s^2}{2}-108s-(2s+1)L(s)
                        -2s\calH_s-\frac52.
\end{equation}
To simplify it, use
$L(s)\le\tfrac52\log(s+1)+1$ and
$\calH_s\le\log(s+1)+1$. The error in
\eqref{eq:explicit-transfer} is at most
\[
 (7s+\tfrac52)\log(s+1)+112s+\tfrac72
 \le10s\log(s+1)+116s,
\]
because $(3s-\tfrac52)\log(s+1)+4s-\tfrac72\ge0$ for $s\ge1$.
This proves \eqref{eq:antisymmetric}.
\end{proof}

\section{Proof of the main theorem and consequences}

\begin{proof}[Proof of Theorem~\ref{thm:main}]
Let $I$ be the set of all order-two elements in $S$, let $q=|I|$,
and set $X=S\setminus I$, $s=k-q$. Zero-sum-freeness makes $I$
linearly independent over $\mathbb F_2$. Therefore
\begin{equation}\label{eq:involutions}
 2^q=|\Sig(I)|\le\N(S)+1.
\end{equation}
If $\N(S)\ge k^2/2$ there is nothing to prove. Otherwise
\[
 q\le\log_2(\N(S)+1)<3\log(k+1),
\]
since $\N(S)+1<k^2/2+1\le(k+1)^2$ and $2/\log2<3$.
The set $X$ is antisymmetric. Proposition~\ref{prop:antisymmetric}
and $s^2/2\ge k^2/2-kq$ yield
\begin{align*}
 \N(S)&\ge\N(X)\\
 &\ge\frac{k^2}{2}-kq-10s\log(s+1)-116s\\
 &\ge\frac{k^2}{2}-13k\log(k+1)-116k.
\end{align*}
If $s=0$, use $\N(X)=0$ in the same estimate; its right side is
then nonpositive. This also covers all small values of $k$.
\end{proof}

\begin{proof}[Proof of Corollary~\ref{cor:olson}]
Let $S$ be a zero-sum-free set of maximum size $k=\Ol(G)-1$.
The case $k=0$ is immediate. Since $\N(S)\le n-1$,
\eqref{eq:basic} gives $k\le\sqrt{6n}$. Put
$a=13\log(k+1)+116$. Theorem~\ref{thm:main} implies
$k^2/2-ak\le n-1$, whence
\[
 k\le a+\sqrt{a^2+2(n-1)}\le\sqrt{2n}+2a.
\]
Also $k+1\le(\sqrt6+1)\sqrt{n+1}$. It follows that
\[
 \Ol(G)\le\sqrt{2n}+13\log(n+1)
                   +26\log(\sqrt6+1)+233
 <\sqrt{2n}+13\log(n+1)+266.
\]
\end{proof}

The coefficient $\sqrt2$ is necessary uniformly over finite abelian
groups. For $n>\tri t$, the residues $1,2,\ldots,t$ form a
zero-sum-free set in $\mathbb Z/n\mathbb Z$. Taking
$n=\tri t+1$ shows that $\Ol(\mathbb Z/n\mathbb Z)\ge t+1$
along a sequence with $t\sim\sqrt{2n}$.

Theorem~\ref{thm:main} leaves a gap of order $k\log k$ from the
classical upper bound. It does not establish the following sharper
possibility.

\begin{conjecture}\label{conj:exact}
For every $k\ge8$,
\[
 f(k)=\left\lfloor\frac{k^2}{2}\right\rfloor+1.
\]
\end{conjecture}

The small values must be treated separately. In particular,
$f(4)=8$ and $f(7)=24$; see \cite{EE,GHHLLP}. Even an improvement
of the error in Theorem~\ref{thm:main} to $O(k)$ would be a further
step. The present proof loses a logarithmic number of initial greedy
steps and also incurs a harmonic error from reciprocal prefix sizes.
Improving the error term to $O(k)$ would require a sharper treatment
of these steps, or additional structural information.

\end{document}